\def\showauthornotes{0}
\def\showkeys{0}
\def\showdraftbox{0}
\definecolor{darkred}{rgb}{0.5,0,0}
\definecolor{darkgreen}{rgb}{0,0.35,0}
\definecolor{darkblue}{rgb}{0,0,0.55}
\newcommand{\Authornote}[3]{{\sf\small\color{#3}{[#1: #2]}}}
\newcommand{\Authorcomment}[2]{{\sf \small\color{gray}{[#1: #2]}}}
\newcommand{\Authorfnote}[2]{\footnote{\color{red}{#1: #2}}}
\newcommand{\Authornote}[3]{}
\newcommand{\Authorcomment}[2]{}
\newcommand{\Authorfnote}[2]{}
\newcommand{\draftbox}{\begin{center}
  \fbox{%
    \begin{minipage}{2in}%
      \begin{center}%
        \begin{Large}%
          \textsc{Working Draft}%
        \end{Large}\\
        Please do not distribute%
      \end{center}%
    \end{minipage}%
  }%
\end{center}
\vspace{0.2cm}}
\newcommand{\draftbox}{}
\newtheorem{theorem}{Theorem}[section]
\newtheorem{conjecture}[theorem]{Conjecture}
\newtheorem{lemma}[theorem]{Lemma}
\newtheorem{corollary}[theorem]{Corollary}
\newtheorem{claim}[theorem]{Claim}
\newtheorem{algo}[theorem]{Algorithm}
\def\FullBox{\hbox{\vrule width 6pt height 6pt depth 0pt}}
\def\qed{\ifmmode\qquad\FullBox\else{\unskip\nobreak\hfil
\penalty50\hskip1em\null\nobreak\hfil\FullBox
\parfillskip=0pt\finalhyphendemerits=0\endgraf}\fi}
\def\qedsketch{\ifmmode\Box\else{\unskip\nobreak\hfil
\penalty50\hskip1em\null\nobreak\hfil$\Box$
\parfillskip=0pt\finalhyphendemerits=0\endgraf}\fi}
\newenvironment{proofof}[1]{\begin{trivlist} \item {\bf Proof of
#1:~~}}
  {\qed\end{trivlist}}
\def\eps{\varepsilon}
\def\epsilon{\varepsilon}
\def\eps{\epsilon}
\def\phi{\varphi}
\def\cal{\mathcal}
\newcommand{\given}{\;\ifnum\currentgrouptype=16 \middle\fi \vert\;}
\newcommand{\ie}{i.e.,\xspace}
\newcommand{\eg}{e.g.,\xspace}
\newcommand{\etal}{et al.\xspace}
\newcommand{\mper}{\,.}
\newcommand{\mcom}{\,,}
\newcommand{\R}{{\mathbb R}}
\newcommand{\indicator}[1]{\mathds{1}_{\{#1\}}}
\newcommand{\gaussian}[2]{{\cal N(#1, #2)}}
\let\nfrac=\nicefrac
\newcommand{\abs}[1]{\ensuremath{\left\lvert #1 \right\rvert}}
\newcommand{\norm}[1]{\ensuremath{\left\lVert #1 \right\rVert}}
\newcommand{\ip}[2] {\ensuremath{\left\langle #1 , #2 \right\rangle}}
\newcommand{\Esymb}{\mathbb{E}}
\newcommand{\Psymb}{\mathbb{P}}
\newcommand{\Varsymb}{\mathrm{Var}}
\DeclareMathOperator*{\ExpOp}{\Esymb}
\def\Pr#1{%
    \ProbabilityRender{\Psymb}{#1}%
}
\def\Ex#1{%
    \ProbabilityRender{\Esymb}{#1}%
}
\def\condPE#1#2{%
	\@ifnextchar\bgroup
	{\ConditionalProbabilityRender{\widetilde{\Esymb}}{#1}{#2}}
	{\ProbabilityRender{\widetilde{\Esymb}}{#1 \given #2}}
}
\def\Var#1{%
    \ProbabilityRender{\Varsymb}{#1}%
}
\def\ConditionalProbabilityRender#1#2#3#4{
	\renderwithdist{#1}{#2}{#3 \given #4}	
}
\def\ProbabilityRender#1#2{%fancy probability command
  \@ifnextchar\bgroup%
  {\renderwithdist{#1}{#2}}
   {\singlervrender{#1}{#2}}
}
\def\singlervrender#1#2{%
   \ensuremath{\mathchoice
       {{#1}\left[ #2 \right]}
       {{#1}[ #2 ]}
       {{#1}[ #2 ]}
       {{#1}[ #2 ]}
   }
}
\def\renderwithdist#1#2#3{%
   \@ifnextchar\bgroup
   {\superfancyrender{#1}{#2}{#3}}
   {\ensuremath{\mathchoice
      {\underset{#2}{#1}\left[ #3 \right]}
      {{#1}_{#2}[ #3 ]}
      {{#1}_{#2}[ #3 ]}
      {{#1}_{#2}[ #3 ]}
     }
   }
}
\def\superfancyrender#1#2#3#4#5{
   \ensuremath{\mathchoice
      {\underset{#1}{{#1}}\left#4 #3 \right#5}
      {{#1}_{#2}#4 #3 #5}
      {{#1}_{#2}#4 #3 #5}
      {{#1}_{#2}#4 #3 #5}
   }
}
\def\expop{\ExpOp}
\newfont{\inhead}{eufm10 scaled\magstep1}
\DeclareMathOperator\supp{Supp}
\newcommand{\inparen}[1]{\left(#1\right)}             %\inparen{x+y}  is (x+y)
\newcommand{\inbraces}[1]{\left\{#1\right\}}           %\inbrace{x+y}  is {x+y}
\newcommand*\diff{\mathop{}\!\mathrm{d}}
\newcommand{\trans}[1]{{#1}^{\mathsf{T}}}
\title{Ellipsoid fitting up to constant \\ via empirical covariance estimation}
\author{
Madhur Tulsiani\thanks{{\tt TTIC}. {\tt madhurt@ttic.edu}. Research partly supported by NSF grant CCF-1816372.} 
\and
June Wu \thanks{{\tt University of Chicago}. {\tt jqw@uchicago.edu}. } 
}
\begin{document}

\date{}

\maketitle
\draftbox

\thispagestyle{empty}
\begin{abstract}
The ellipsoid fitting conjecture of Saunderson, Chandrasekaran, Parrilo and Willsky considers the maximum number $n$ random Gaussian points in $\R^d$, such that with high probability, there exists an origin-symmetric ellipsoid passing through all the points. They conjectured a threshold of $n = (1-o_d(1)) \cdot d^2/4$, while until recently, known lower bounds on the maximum possible $n$ were of the form $d^2/(\log d)^{O(1)}$.

We give a simple proof based on concentration of sample covariance matrices, that with probability $1 - o_d(1)$, it is possible to fit an ellipsoid through $d^2/C$ random Gaussian points.
Similar results were also obtained in two recent independent works by Hsieh, Kothari, Potechin and Xu [arXiv, July 2023] and by Bandeira, Maillard, Mendelson, and Paquette [arXiv, July 2023].

\end{abstract}
\newpage

% \pagenumbering{roman}
% \tableofcontents
% \clearpage

\pagenumbering{arabic}
\setcounter{page}{1}

%\newpage

%%%%%%%%%%%%%%%%%%%%%%%%%%%%%%%%%%%%%%%%%%%%%%%%%%%%%%%%%%%%%%%%%%%%%%%%%%%%%%%%%%%%
\section{Introduction}
In this note, we consider the problem of fitting an ellipsoid to random Gaussian points in $d$ dimensions. 
An origin-symmetric ellipsoid in $\R^d$ is specified by the equation $\ip{x}{Qx} = 1$, for a positive semidefinite matrix $Q \succeq 0$. The following problem was considered by Saunderson, Chandrasekaran, Parrilo, and Willsky \cite{SCPW12, SPW13} in the context of recovering a subspace from noisy measurements, using semidefinite programs:
Given $z_1, \ldots, z_n \in \R^d$, does there exist $Q \succeq 0$ such that $\ip{z_i}{Qz_i} = 1 ~~\forall i \in [n]$?

They proved that for a $d$-dimensional subspace, fitting the projections of $n$ canonical basis vectors $V$ to an ellipsoid, is equivalent to recovering $V^{\perp}$ using a canonical semidefinite program. Thus, one is interested in determining how large can $n$ be taken relative to $d$, to ensure small co-dimension for the subspace $V^{\perp}$ (see \cite{Saunderson11} for a detailed discussion).

Saunderson \etal considered both average-case and worst-case versions of the above problem, and here we discuss average-case version where the points $z_1, \ldots, z_n$ can be taken to be independent samples from a Gaussian distribution. Since the problem is invariant under linear transformations (as $Q$ can be modified to $LQ\trans{L}$ for any linear transformation $L$), it is convenient to consider $z_1, \ldots, z_n \sim \gaussian{0}{\frac{1}{d} \cdot I_d}$, so that in expectation, the points lie on the unit sphere $S^{d-1}$. One then considers the problem of determining the largest $n$ as a function of $d$, such that with probability close to 1 (say $1 - o_d(1)$), there exists an ellipsoid passing through the random points $z_1, \ldots, z_n$. It is easy to see that we must have $n < \binom{d+1}{2}$ via dimension arguments. Saunderson \etal~\cite{SCPW12, Saunderson11} made the following conjecture based on experimental results.
\begin{conjecture}[\cite{SCPW12, Saunderson11}]
For arbitrarily small $\eps > 0$, if $n < (\nfrac14-\eps) \cdot d^2$, then with high probability over $z_1, \ldots, z_n \sim \gaussian{0}{\frac{1}{d} \cdot I_d}$, there exists an ellipsoid passing through all the points.
\end{conjecture}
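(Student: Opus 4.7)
The plan is to construct an explicit candidate $Q$ via least squares, enforce the interpolation constraints $\ip{z_i}{Q z_i} = 1$ by design, and reduce the PSD requirement to a sharp operator norm bound on the least squares correction. The ansatz is $Q = \tfrac{1}{d} I_d + \Delta$, where $\Delta$ is the minimum Frobenius norm symmetric matrix satisfying $\ip{z_i}{\Delta z_i} = 1 - \|z_i\|^2/d$ for all $i \in [n]$. Since $Q \succeq 0$ iff the smallest eigenvalue of $\Delta$ is at least $-1/d$, the entire conjecture reduces to showing $\|\Delta\|_{\mathrm{op}} \leq (1 - \Omega(\eps))/d$ whenever $n < (\tfrac{1}{4} - \eps) d^2$.

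By the structure of least squares on $\sym_d$, the optimal $\Delta$ lies in the span of $\{z_i \trans{z_i}\}_{i \in [n]}$. Explicitly $\Delta = \sum_i \alpha_i \, z_i \trans{z_i}$ with $\alpha = K^{-1} b$, where $K_{ij} = \ip{z_i}{z_j}^2$ is the Gram matrix of $\{z_i \trans{z_i}\}_i$ in Frobenius inner product and $b_i = 1 - \|z_i\|^2/d$. As a first technical step I would control $K$: its expectation has a rank one spike of size $\approx n/d$ on top of a bulk near $1$, and with high probability $K$ is invertible with bulk eigenvalues bounded away from $0$ for all $n$ up to the information theoretic threshold $d^2/2$. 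A Schur complement on the spike direction together with standard covariance concentration should suffice here.

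Next, I would bound $\|\Delta\|_{\mathrm{op}}$ directly via a trace moment method, expanding $\Ex{\tr(\Delta^{2p})}$ as a weighted sum over closed walks on $\{z_i\}$ with weights given by entries of $K^{-1}$. The key quantitative claim is that the bulk spectrum of $d \cdot \Delta$ converges, heuristically, to a shifted semicircle like law of radius $\approx 2\sqrt{n}/d$, so that its left edge crosses $-1$ precisely at $n = d^2/4$. For $n < (\tfrac{1}{4} - \eps) d^2$ this would give $\|\Delta\|_{\mathrm{op}} \leq (1 - \Omega(\eps))/d$ with probability $1 - o_d(1)$, completing the proof.

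The main obstacle, and the source of the exact factor $1/4$, is precisely this sharp edge analysis. Matrix Bernstein or naive second moment arguments only control $\|\Delta\|_{\mathrm{op}}$ up to constants and yield a bound of the form $n \leq d^2/C$, which is exactly what this paper establishes via empirical covariance estimation. To push to the sharp threshold, one needs either a self consistent Stieltjes transform computation identifying the exact limiting edge of the spectrum of $\Delta$, or a leave one out scheme that adds points sequentially and tracks the worst eigenvalue together with a Lyapunov quantity measuring its distance from $-1/d$. Both are delicate because $\Delta$ depends nonlinearly on every point through $K^{-1}$, and the cancellations that produce the factor $1/4$ arise from correlations between the rows of $Z$ and entries of $K^{-1}$ that typical decoupling estimates destroy. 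I expect this extreme eigenvalue analysis, not the initial reduction, to dominate the technical difficulty.
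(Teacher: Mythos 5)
You are addressing the ellipsoid fitting conjecture itself, and it is important to be clear that neither this paper nor your proposal proves it: the paper establishes only the weaker statement $n \leq d^2/C$ for an unspecified constant $C$ (\cref{thm:main}), and your write-up is an outline whose decisive step is left open. Structurally, your construction is exactly the Kane--Diakonikolas identity-perturbation / least-squares witness that the paper analyzes: your $K_{ij} = \ip{z_i}{z_j}^2$ is (up to normalizing the $z_i$ to unit vectors) the Gram matrix $M_{ij} = \ip{X_i^{\otimes 2}}{X_j^{\otimes 2}}$, your coefficient vector $\alpha = K^{-1}b$ is the paper's $\delta = M^{-1}\eps$, and your ``Schur complement on the spike plus covariance concentration'' step is the content of \cref{operator}. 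All of that machinery, here and in your plan, only controls the perturbation up to constant factors. The entire weight of the sharp threshold rests on your third step --- the claim that the spectrum of the correction has a semicircle-type bulk whose left edge crosses the critical value exactly at $n = d^2/4$ --- and you state this as a heuristic, then explicitly concede that the required sharp extreme-eigenvalue analysis (handling the nonlinear dependence on $K^{-1}$ and the correlations between $\delta$ and the $X_i$) is the open difficulty. A proof cannot end where the difficulty begins; as written this is a research plan reproducing the known $d^2/C$ route, not a proof of the conjectured $1/4$.

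There is also a concrete quantitative problem with the heuristic on which your claimed threshold rests. The coefficients satisfy $\delta \approx \eps$ where $\eps_i = \nfrac{1}{\norm{G_i}_2^2} - 1$ has variance $\approx 2/d$ (\cref{subgaussian}), so for $P = \sum_i \delta_i X_i \trans{X_i}$ one gets $\E[P^2] \approx (2n/d^2)\cdot I_d$ and the Wigner-type edge prediction is $2\sqrt{2n}/d$, which reaches $1$ already near $n \approx d^2/8$; your stated radius $2\sqrt{n}/d$ implicitly assigns the norm fluctuations variance $1/d$ rather than $2/d$. So it is not even heuristically clear that the minimum-Frobenius-norm interpolant remains positive semidefinite up to $(\nfrac14 - \eps)d^2$: the conjecture quantifies over \emph{all} $Q \succeq 0$, and hitting the sharp constant may require a genuinely different witness (or exploiting precisely the correlations your decoupling would discard), not merely a sharper edge analysis of this one. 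Finally, a normalization slip: with the paper's convention $z_i \sim \gaussian{0}{\frac{1}{d} I_d}$ the base matrix should be $I_d$, so that the right-hand sides $1 - \norm{z_i}_2^2$ are $O(1/\sqrt{d})$; with your ansatz $Q = \frac{1}{d} I_d + \Delta$ the constraints force $\ip{z_i}{\Delta z_i} \approx 1$ with $\norm{z_i}_2 \approx 1$, which is incompatible with the target bound $\norm{\Delta} \leq (1-\Omega(\eps))/d$ --- you are implicitly working with $z_i \sim \gaussian{0}{I_d}$, and the two scalings should be reconciled before any edge computation is attempted.
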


In addition to being an intriguing conjecture in its own right, this problem has also attracted recent attention because of its connections to techniques used in the study of lower bounds for semidefinite programs. 
In such lower bounds, the key technical challenge is often to come up with a candidate matrix, \emph{which may depend on random data}, and then to prove the positivity of such a matrix (with high probability). 
The task of finding the matrix $Q \succeq 0$ also requires exhibiting a solution to a semidefinite program, and in fact some of the known results for the ellipsoid fitting problem have a significant overlap with semidefinite programming lower bounds, in terms of the techniques.
\vspace{-10 pt}
\paragraph{Known results.} Saunderson \etal \cite{SPW13} proved the ellipsoid fitting property (with high probability) for $n$ random points when $n \leq d^{6/5 - \eta}$ for arbitrarily small $\eta>0$. 
This bound was improved to $n \leq d^{3/2 - \eta}$ by a result of Ghosh \etal~\cite{GJJPR20} in the context of lower bounds for the Sum-of-Squares semidefinite programming hierarchy, for the closely related ``Planted Affine Planes" problem. 
These bounds were further improved to $n \leq d^2/(\log d)^{O(1)}$ by two independent works of Potechin \etal ~\cite{PTVW22}, and of Kane and Diakonikolas \cite{DK22}. The work of Potechin \etal relied on techniques used in proofs of Sum-of-Squares lower bounds, such as decompositions of random matrices into so called ``graph matrices" (and a careful analysis of the coefficients in such decompositions, as well as trade-off between various terms~\cite{AMP21, PR20}). On the other hand, Kane and Diakonikolas gave a more direct proof based on the spectral analysis of a single random matrix arising from the random data.
\vspace{-10 pt}
\paragraph{Recent and independent work.} Very recently, two independent works by Hsieh, Kothari, Potechin and Xu \cite{HKPX23}, and by Bandeira, Maillard, Mendelson, and Paquette \etal~\cite{BMMP23} improved the bound on the number of points to $n \leq d^2/C$ for some (possibly large) constant $C > 0$, in both cases by obtaining a sharper analysis of the construction in \cite{DK22}. 
The result of Hsieh \etal relies on a more careful application of the graph matrix decompositions arising in \cite{PTVW22}, while the work of Bandeira \etal uses a different approach based on concentration bounds for Gram matrices of independently sampled vectors from distributions with good tail behavior. 

In this note, we present independent results, also proving that with high probability, there exists an ellipsoid passing through $n$ random points, for $n \leq d^2/C$ (see \cref{thm:main} for a formal statement of our result). 
The approach in this work also gives a sharper analysis of the construction of Kane and Diakonikolas, obtaining spectral norm bounds for random matrices arising in their construction, by using results on the concentration of empirical covariance matrices. 
Similar ideas were also used independently in the work of Bandeira \etal~\cite{BMMP23}, although the approaches differ in the specifics of the concentration results used to obtain the relevant norm bounds.

%%%%%%%%%%%%%%%%%%%%%%%%%%%%%%%%%%%%%%%%%%%%%%%%%%%%%%%%%%%%%%%%%%%%%%%%%%%%%%%%%%%%
\subsection{Preliminaries and notation}
The Euclidean norm $\|\cdot\|_2$ and scalar product $\langle \cdot , \cdot \rangle$ in $\mathbb{R}^d$ are defined here using the counting measure on the coordinates. For a matrix $A \in \R^{m \times n}$, $\norm{A}$ denotes its spectral norm, and $\norm{A}_F$ denotes the Frobenius norm.
For a real symmetric matrix $B$, $diag(B)$ denotes the diagonal matrix obtained from $B$ replacing all non-diagonal entries by $0$. 
We denote various positive absolute constants by $C, C', C_0, C_1, \ldots$ when the constants are greater than 1, and  $c, c', c_0, c_1, \ldots$ when they are smaller than 1. The values of the constants may change from one instance to another.
\vspace{-10 pt}
\paragraph{Orlicz norms, sub-gaussian and sub-exponential random variables.}
Let $X$ be a real-valued random variable with mean zero. We define the Orlicz $\alpha$-norm $\psi_\alpha$ for $\alpha \geq 1$ as
$$
\|X\|_{\psi_\alpha} = \inf \{t > 0: \mathbb{E} \exp(|X|^\alpha / t^\alpha) \leq 2 \}.
$$
Let $Z$ be a random vector in $\mathbb{R}^n$. $\psi_\alpha$ norm of a random vector is characterized by its one-dimensional projections,
$$
\|Z\|_{\psi_\alpha} = \sup_{x\in S^{n-1}} \|\langle Z, x\rangle \|_{\psi_\alpha}.
$$
A random variable $X$ with bounded $\norm{X}_{\psi_1}$ is known as a sub-exponential random variable, while one with bounded $\norm{X}_{\psi_2}$ is known as a sub-gaussian random variable. 
The above definitions are also known to be equivalent to decay of the the tail probabilities $\Pr{\abs{X} \geq t}$ as $\exp\inparen{- t/a_1}$ for sub-exponential random variables, and $\exp\inparen{- t^2/a_2^2}$ for sub-gaussian random variables, where $a_1$ and $a_2$ are within constant factors of the $\psi_1$ and $\psi_2$ norms respectively (see \eg \cite{Ver18}). 

%%%%%%%%%%%%%%%%%%%%%%%%%%%%%%%%%%%%%%%%%%%%%%%%%%%%%%%%%%%%%%%%%%%%%%%%%%%%%%%%%%%%
\section{Fitting an ellipsoid to $d^2/C$ random points}
We will prove that with high probability over the choice of $n$ independent Gaussian vectors in $\R^d$, there exists an origin-symmetric ellipsoid passing through all the points, when $n \leq d^2/C$ for some fixed constant $C$.
\begin{theorem} [Main result]
\label{thm:main}
There exists a universal constant $C > 4$ such that for $n \leq d^2/C$, and with probability $1 - o_d(1)$ over the choice of i.i.d. samples $G_1, \ldots, G_n$ from $\gaussian{0}{\frac{1}{d} \cdot I_d}$, there exists a positive semidefinite matrix $Q \succeq 0$ satisfying
$\ip{G_i}{Q G_i} ~=~ 1 ~~\forall i \in [n]$.
\end{theorem}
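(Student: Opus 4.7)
The plan is to construct an explicit candidate $Q$ as a small perturbation of $I_d$, designed so that the $n$ linear constraints $\ip{G_i}{QG_i} = 1$ hold exactly, and then to verify that the perturbation has operator norm strictly less than $1$, which forces $Q \succeq 0$.

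To set this up I would use the ansatz $Q = I_d + L$ with $L = \sum_{j=1}^n c_j Y_j$, where $Y_j \defeq G_j \trans{G_j} - \tfrac{1}{d} I_d$ are the centered rank-one matrices. Expanding each constraint gives $\|G_i\|_2^2 + \sum_j c_j(\ip{G_i}{G_j}^2 - \|G_i\|_2^2/d) = 1$, which, after absorbing the negligible term $\mathrm{tr}(Y_j)/d = (\|G_j\|_2^2 - 1)/d$ via a mild symmetrization, reduces to a linear system $Kc = y$. Here $K \in \R^{n \times n}$ is essentially the Gram matrix $K_{ij} = \ip{Y_i}{Y_j}_F = \ip{G_i}{G_j}^2 - (\|G_i\|_2^2 + \|G_j\|_2^2)/d + 1/d$, and $y \in \R^n$ has $y_i \approx 1 - \|G_i\|_2^2$. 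Standard $\chi^2$-concentration gives $\|y\|_2 = O(\sqrt{n/d})$ with probability $1 - o_d(1)$.

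The central technical step is to show that $K$ is well-conditioned, i.e.\ $c_1 \leq \lambda_{\min}(K) \leq \lambda_{\max}(K) \leq C_1$ for universal constants with probability $1-o_d(1)$. This is where concentration of empirical covariance matrices enters. Viewing $\mathrm{vec}(Y_j)$ as a mean-zero sub-exponential random vector in the space of symmetric $d\times d$ matrices, of dimension $D = \binom{d+1}{2} = \Theta(d^2)$ and with covariance operator on the order of $\tfrac{1}{d^2} I$, the regime $n \leq d^2/C$ corresponds to the aspect ratio $n/D \leq 1/C'$. For this aspect ratio, a Marchenko--Pastur style singular-value concentration bound for sub-exponential vectors (for example the results of Adamczak--Litvak--Pajor--Tomczak-Jaegermann, or related bounds for log-concave vectors) implies that the non-zero singular values of the design matrix $\Phi \in \R^{n \times D}$ with rows $\mathrm{vec}(Y_j)$ lie in an interval bounded away from $0$ and $\infty$ with probability $1 - o_d(1)$, from which the required spectral bounds on $K = \Phi \trans{\Phi}$ follow.

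Given this, $c = K^{-1} y$ satisfies $\|c\|_2 = O(\|y\|_2) = O(\sqrt{n/d}) = O(\sqrt{d})$, and it remains to show $\|L\| \leq 1/2$. This is the step I expect to be the main obstacle, because the naive Frobenius bound $\|L\| \leq \|L\|_F \leq \|\Phi\| \cdot \|c\|_2 = O(\sqrt{d})$ is far too weak. To extract operator-norm control I would use an $\eps$-net argument on $S^{d-1}$: writing $\trans{u} L u = \ip{c}{v(u)}$ with $v(u)_j = \ip{G_j}{u}^2 - 1/d$, the entries of $v(u)$ are i.i.d.\ centered sub-exponential random variables of variance $O(1/d^2)$ for each fixed $u$. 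If $c$ were independent of $v(u)$, a Bernstein-type bound would give $|\ip{c}{v(u)}| = O(\|c\|_2/d) = O(1/\sqrt{d})$ pointwise, more than enough. The actual difficulty is that $c$ is a data-dependent vector built from the same $G_j$'s, so one must decouple the dependence (e.g.\ by a leave-one-out argument, or by expanding $K^{-1}$ as a Neumann series and isolating the contribution of any single $G_j$) and then promote the pointwise bound to a uniform bound over the sphere via a covering argument. Handling this coupling cleanly, without paying additional logarithmic factors that would degrade the final constant $C$, is what I expect to be the crux of the proof.
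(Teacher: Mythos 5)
Your high-level plan — identity-plus-perturbation ansatz, Gram matrix of sub-exponential vectors conditioned via empirical-covariance concentration (ALPTJ), then an $\eps$-net over $S^{d-1}$ — matches the paper's architecture, and step~(2) (well-conditioning of the Gram matrix at aspect ratio $n/d^2 \leq 1/C$) is essentially correct as you describe it. But the proposal has a genuine gap at exactly the point you flag as the crux, and the gap is not just a matter of detail.

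In your parametrization $L = \sum_j c_j(G_j\trans{G_j} - \tfrac1d I_d)$, the right-hand side $y_i \approx 1 - \|G_i\|_2^2$, the matrix $K$, and the test vector $v(u)_j = \ip{G_j}{u}^2 - 1/d$ are \emph{all} functions of the same Gaussians $G_j$. You propose to break the dependence between $c = K^{-1}y$ and $v(u)$ by leave-one-out or by a Neumann expansion of $K^{-1}$, but neither is carried out, and doing either cleanly at constant aspect ratio (without log-factor slippage) is precisely the hard part. Moreover, even granting such a decoupling, the Bernstein bound you invoke is too weak: the sub-exponential branch scales like $\exp(-c\,d/\|c\|_\infty)$, and the crude estimate $\|c\|_\infty \leq \|c\|_2 = O(\sqrt d)$ only gives $\exp(-\Omega(\sqrt d))$, not enough to survive the $9^d$-size union bound over the net. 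You would also need an $\ell_\infty$ bound on $c$ and a separate treatment of the few ``heavy'' coordinates of $v(u)$ of size $\Omega(1)$.

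The paper's fix is a different (and cleaner) parametrization, due to Kane--Diakonikolas: write $G_i = \|G_i\|_2 X_i$ with $X_i = G_i/\|G_i\|_2$ uniform on $S^{d-1}$ and \emph{independent} of $\|G_i\|_2$, and take $Q = I_d + \sum_i \delta_i\, X_i\trans{X_i}$. The constraints become $M\delta = \eps$ with $M_{ij} = \ip{X_i}{X_j}^2$ and $\eps_i = \|G_i\|_2^{-2} - 1$. Now $M$ and $\beta(u)_i = \ip{u}{X_i}^2$ depend only on the directions, while $\eps$ depends only on the norms — so $\eps$ is exactly independent of $M^{-1}\beta(u)$. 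Rewriting $\ip{\delta}{\beta} = \ip{\eps}{M^{-1}\beta}$ makes the summands conditionally independent with no decoupling argument needed, and Hoeffding applies. This independence is what lets the argument close at $n = d^2/C$; the heavy/light split of $\beta$ (few coordinates with $\beta_i > t_0$ handled via an $\ell_\infty$ bound on $\delta$; the rest via an $\ell_2$ bound plus Hoeffding) supplies the remaining quantitative control. If you keep the uncentered $G_j$-parametrization, you must replace this free independence with a real decoupling lemma, which your sketch does not provide.
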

We use the identity-perturbation construction of Kane and Diakonikolas~\cite{DK22}, which can be derived by considering the dual of the semidefinite program for finding the matrix $Q$ as described above. Since the program has a constraint $\ip{G_i}{Q G_i} ~=~ 1$ for each $i \in [n]$, the construction is defined by dual variables $\delta_i \in \R$ for each $i \in [n]$.
In particular, let $G_i = \norm{G_i}_2 \cdot X_i$, where $X_i \in S^{d-1}$ are random unit vectors, independent of the norms $\norm{G_i}_2$. Kane and Diakonikolas consider the matrix
\[
Q ~=~ I_d + \sum_{i = 1}^n \delta_i \cdot X_i \trans{X_i} \mcom
\] 
and show that with high probability the (unique) choice of the variables $\delta_i$ satisfying $\ip{G_i}{Q G_i} = 1$ also satisfies $Q \succeq 0$. Considering the linear constraints $\ip{G_i}{Q G_i} = 1$ on the matrix $Q$ above yields the linear system $M \delta = \epsilon$, for the matrix $M$ and the vector $\epsilon$ defined as
\[
M_{ij} ~=~ \ip{X_i}{X_j}^2 ~=~ \ip{X_i^{\otimes 2}}{X_j^{\otimes 2}}\qquad \text{and} \qquad \epsilon_i ~=~ \frac{1}{\norm{G_i}_2^2} - 1 \mper
\]
The proof then has two key components. The first is to show that the matrix $M$ is invertible, and in fact the random matrix $M^{-1}$ has bounded spectral norm with high probability. The second is to obtain bounds on the random vector $\epsilon$ (independent of $M$) of deviations in norms of $G_i$, to bound the norm of the perturbation $\sum_i \delta_i \cdot X_i \trans{X_i}$.

It is in the first part of the proof that our approach differs significantly from that of Diakonikolas and Kane~\cite{DK22}. Their proof relied on using the trace power method (which is the most technically involved part of the proof) and yielded bounds for $n \leq d^2/(\log d)^{O(1)}$. On the other hand, we rely on the fact that the vectors $X_i^{\otimes 2}$ have a sub-exponential distribution, and use existing results on the concentration of empirical covariance matrices for random sub-exponential vectors. 
The second part of our proof is similar to that of Kane and Diakonikolas, but needs to develop improved tail estimates for the vector $\epsilon$, to avoid the loss of $(\log d)^{O(1)}$ factors.

\subsection{Spectral norm bound for the Gram matrix}
We will bound the norm of $M^{-1}$ for the Gram matrix $M$ defined as $M_{ij} = \ip{X_i^{\otimes 2}}{X_j^{\otimes 2}}$ by using the results of Adamczak \etal~\cite{ALPTJ10a, ALPTJ10b} on the concentration of empirical covariance matrices, for independent sub-exponential random vectors. 
For independent (mean-zero) random vectors $Y_1, \ldots, Y_n$, forming columns of a matrix $Y$, the empirical covariance matrix is given (up to scaling) by $Y\trans{Y}$. Since $Y\trans{Y}$ has the same non-zero eigenvalues as the the Gram matrix $\trans{Y}Y$, their proofs in fact proceed by understanding the spectrum of the Gram matrix, which implies the results needed for the matrix $M$ above.
We will need the following inequality.
\begin{theorem}[Hanson-Wright Inequality, see e.g. \cite{Ver18} Theorem 6.2.1] 
\label{hanson-wright}
Let $Z = (z_1, \dots, z_d) \in \mathbb{R}^d$ be a random vector with independent mean-zero sub-gaussian coordinates. Let $A$ be a $d \times d$ matrix. Then, for every $t \geq 0$, we have
$$
\Pr{\abs{\trans{Z}AZ - \Ex{\trans{Z}AZ}} \geq t} ~\leq~ 2 \exp \left ( -c \min \left( \frac{t^2}{K^4\|A\|^2_{F}}, \frac{t}{K^2\|A\|} \right) \right ),
$$
where $K = \max_i \|z_i\|_{\psi_2}$.
\end{theorem}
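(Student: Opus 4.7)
The plan is to follow the classical Rudelson--Vershynin style proof of Hanson--Wright. First, by rescaling we may assume $K = 1$. Split the chaos into diagonal and off-diagonal parts,
$$
\trans{Z}AZ - \Ex{\trans{Z}AZ} ~=~ \sum_{i} A_{ii}\inparen{z_i^2 - \Ex{z_i^2}} ~+~ \sum_{i \neq j} A_{ij}\, z_i z_j \mcom
$$
bound each separately, and combine by applying the bound at $t/2$ with a union bound. The diagonal part is easy: each $z_i^2 - \Ex{z_i^2}$ is a centered sub-exponential variable with $\psi_1$-norm $O(1)$, so Bernstein's inequality for sums of independent sub-exponentials directly yields a tail of the form $2 \exp\inparen{-c \min\inparen{t^2/\sum_i A_{ii}^2,\; t/\max_i |A_{ii}|}}$, and we bound $\sum_i A_{ii}^2 \leq \norm{A}_F^2$ and $\max_i \abs{A_{ii}} \leq \norm{A}$.

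The real work is the off-diagonal chaos $S := \sum_{i \neq j} A_{ij} z_i z_j$. Here I would proceed by bounding its moment generating function. First, apply a decoupling step (de la Pe\~na--Gin\'e / Bourgain--Tzafriri): there is a universal constant $C_0$ so that $\Ex{\exp(\lambda S)} \leq \Ex{\exp(C_0 \lambda \widetilde{S})}$, where $\widetilde{S} = \sum_{i,j} A_{ij}\, z_i z'_j$ and $Z' = (z'_1, \ldots, z'_d)$ is an independent copy of $Z$. Conditioned on $Z'$, the quantity $\widetilde{S} = \sum_i z_i (AZ')_i$ is a linear combination of independent mean-zero sub-gaussians, so
$$
\Ex{\exp(C_0 \lambda \widetilde{S}) \given Z'} ~\leq~ \exp\inparen{C_1 \lambda^2 \norm{AZ'}_2^2}.
$$
It remains to bound $\Ex{\exp(C_1 \lambda^2 \norm{AZ'}_2^2)}$, where $\norm{AZ'}_2^2 = \trans{Z'}(\trans{A}A)Z'$ is again a quadratic form, but now in a \emph{positive semidefinite} matrix $B = \trans{A}A$ with $\norm{B} \leq \norm{A}^2$, $\tr(B) = \norm{A}_F^2$.

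For this last step I would use the Gaussian comparison trick: by a standard symmetrization / contraction argument, one can replace sub-gaussian $Z'$ by a standard Gaussian $g$ at the cost of an absolute constant in the exponent, and then use the explicit formula $\Ex{\exp(s\, \trans{g}Bg)} = \det(I - 2sB)^{-1/2}$, valid when $s \norm{B} < 1/2$. For $s = C_1 \lambda^2$ and $\lambda$ in the range $\abs{\lambda} \leq c/\norm{A}$ this determinant is at most $\exp\inparen{C_2 \lambda^2 \norm{A}_F^2 + C_3 \lambda^2 \tr(B)} \leq \exp\inparen{C_4 \lambda^2 \norm{A}_F^2}$ once one also subtracts off $\Ex{\norm{AZ'}_2^2}$ (the centering is why we introduced $Z^\top AZ - \Ex{Z^\top AZ}$ from the start). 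Combining these inequalities gives
$$
\Ex{\exp\inparen{\lambda\inparen{\trans{Z}AZ - \Ex{\trans{Z}AZ}}}} ~\leq~ \exp\inparen{C_5 \lambda^2 \norm{A}_F^2} \qquad \text{for } \abs{\lambda} \leq c/\norm{A}.
$$

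Finally, a Chernoff bound with the optimal choice of $\lambda$ yields the two regimes in the statement: for $t \leq \norm{A}_F^2 / \norm{A}$ pick $\lambda = t/(2C_5 \norm{A}_F^2)$ to recover the sub-gaussian tail $\exp(-c t^2 / \norm{A}_F^2)$, and for larger $t$ pick $\lambda = c/\norm{A}$ to recover the sub-exponential tail $\exp(-c t/\norm{A})$. The main obstacle is the final MGF step: controlling $\Ex{\exp(C_1 \lambda^2 \norm{AZ'}_2^2)}$ without an extra factor of $\log d$ requires either the Gaussian comparison described above or a careful iteration/truncation argument on the sub-gaussian quadratic form, and one must be attentive to constants so that both the sub-gaussian and sub-exponential regimes emerge cleanly from the same MGF bound.
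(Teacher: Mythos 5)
The paper does not prove this statement at all --- it is quoted as a known result from \cite{Ver18}, Theorem 6.2.1 --- and your sketch is essentially the proof given in that cited source (the Rudelson--Vershynin argument: diagonal part via Bernstein, off-diagonal part via decoupling, comparison to the Gaussian chaos MGF and the determinant formula, then a Chernoff bound producing the two regimes), so it is the same approach as the paper's, and it is sound as a sketch. Two minor points of looseness, neither fatal: the replacement of the sub-gaussian $Z'$ by a Gaussian is usually carried out not by a contraction principle but by the MGF identity $\exp\inparen{C\lambda^2\norm{AZ'}_2^2} = \Ex{g}{\exp\inparen{\sqrt{2C}\,\lambda\ip{g}{AZ'}}}$ followed by integrating over $Z'$ coordinate-wise, and the off-diagonal chaos is already mean zero, so no subtraction of $\Ex{\norm{AZ'}_2^2}$ is needed once one uses $\det(I-2sB)^{-1/2} \leq \exp\inparen{Cs\,\tr(B)}$ for $s\norm{B}$ bounded away from $1/2$.
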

We now consider the centered versions of the vectors $X^{\otimes 2}$, defined as $Y = X^{\otimes 2} - \expop X^{\otimes 2}$, and show for any unit vector $u \in \R^{d^2}$, the random variable $\ip{u}{Y}$ has sub-exponential tail. Since a vector $u \in \R^{d^2}$ can be thought of as a matrix $A \in \R^{d \times d}$, we can obtain the required tail behavior using \cref{hanson-wright}.
\begin{lemma} [Sub-exponential behavior]
\label{subexponential}
Let $X = (x_1, \dots, x_d)$ be a random vector uniformly distributed on the unit sphere: $X \sim Unif(S^{d-1})$. Then $Y = X^{\otimes 2} - \mathbb{E} X^{\otimes 2}$ is a sub-exponential random vector satisfying $\norm{X^{\otimes 2} - \mathbb{E} X^{\otimes 2}}_{\psi_1} \leq C/d.$
\end{lemma}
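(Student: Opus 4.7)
The plan is to identify a unit vector $u \in S^{d^2-1}$ with a matrix $A \in \R^{d \times d}$ having $\norm{A}_F = 1$, so that $\ip{u}{X^{\otimes 2}} = \trans{X}AX$, and then show $\|\trans{X}AX - \Ex{\trans{X}AX}\|_{\psi_1} \leq C/d$ with a constant $C$ independent of $A$. Since only the symmetric part of $A$ contributes to $\trans{X}AX$, I may as well assume $A$ is symmetric, and using $\Ex{XX^\top} = I_d/d$ one gets $\Ex{\trans{X}AX} = \tr(A)/d$. The natural move is to realize $X$ as $g/\norm{g}_2$ for $g \sim \gaussian{0}{I_d}$, which turns the quantity of interest into
\[
\trans{X}AX - \tr(A)/d ~=~ \frac{\trans{g}Bg}{\norm{g}_2^2}\mcom \qquad\text{where}\qquad B ~:=~ A - \frac{\tr(A)}{d}\,I_d\mper
\]
Crucially $\tr(B)=0$, and a direct calculation gives $\norm{B}_F \leq \norm{A}_F + |\tr(A)|/\sqrt{d} \leq 2$ and $\norm{B} \leq \norm{A} + |\tr(A)|/d \leq 2$, using $|\tr(A)| \leq \sqrt{d}\norm{A}_F$.

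Next I would apply the Hanson--Wright inequality (\cref{hanson-wright}) to the Gaussian vector $g$ and the matrix $B$, obtaining
\[
\Pr{\abs{\trans{g}Bg} \geq s} ~\leq~ 2\exp\inparen{-c\,\min(s^2, s)}
\]
for every $s \geq 0$, since $\Ex{\trans{g}Bg} = \tr(B) = 0$ and $\norm{B}, \norm{B}_F$ are $O(1)$. Combining this with the standard chi-square concentration $\Pr{\norm{g}_2^2 \leq d/2} \leq 2\exp(-cd)$, a union bound yields, for every $t \geq 0$,
\[
\Pr{\abs{\trans{X}AX - \tr(A)/d} \geq t} ~\leq~ 2\exp\inparen{-c\,\min(t^2 d^2,\, td)} + 2\exp(-cd)\mper
\]
On the range $t \geq 1/d$ the exponential tail $\exp(-ctd)$ dominates, giving exactly the sub-exponential decay at rate $1/d$ required by the $\psi_1$ norm. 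The residual $2\exp(-cd)$ term is harmless since it is only relevant for $t$ much smaller than $1/d$, where the trivial bound $1$ already beats $\exp(-t/(C/d))$.

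To finish I would convert these tail bounds into a moment generating bound $\Ex{\exp(|\trans{X}AX - \tr(A)/d|/(C/d))} \leq 2$ by a routine integration-by-parts calculation, splitting the integral at $t = 1/d$ to handle the Gaussian regime and the exponential regime separately, and absorbing the $\exp(-cd)$ term into the constant by choosing $C$ sufficiently large. Taking the supremum over all unit $u \in S^{d^2-1}$ then gives $\norm{X^{\otimes 2} - \Ex{X^{\otimes 2}}}_{\psi_1} \leq C/d$ by the definition of the vector $\psi_1$ norm. The main technical care is the bookkeeping of the two tail regimes, but no step is delicate: the only quantitative inputs are Hanson--Wright, chi-square concentration, and the trace-bound $|\tr(A)| \leq \sqrt{d}\norm{A}_F$, which is precisely what converts a matrix centering to a sub-exponential tail scale of $1/d$.
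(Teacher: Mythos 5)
Your proof is correct and follows essentially the same route as the paper: reshape $u$ into a $d\times d$ matrix $A$, pass to the Gaussian representation $X = g/\norm{g}_2$, apply Hanson--Wright to the resulting quadratic form, and combine with $\chi^2$ concentration of $\norm{g}_2^2$ to obtain the sub-exponential tail at scale $1/d$, finishing with the bounded range $\abs{X^\top A X} \le 1$ to absorb the residual $\exp(-cd)$ term. Your explicit centering via $B = A - (\tr A/d)\,I_d$, which makes $\mathbb{E}[g^\top B g] = 0$ before Hanson--Wright is invoked, is in fact slightly cleaner than the paper's write-up, which applies the inequality directly to $A$ and does not spell out that the quantity produced after rescaling is $G^\top A G - \norm{G}_2^2\,\tr(A)/d = G^\top B G$ rather than $G^\top A G - \mathbb{E}[G^\top A G]$; your use of $\abs{\tr A} \le \sqrt{d}\,\norm{A}_F$ to bound $\norm{B}_F$ and $\norm{B}$ supplies exactly the missing bookkeeping.
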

\begin{proof}
Consider a standard normal random vector $G \sim \gaussian{0}{I_d}$. The direction $X = G /\norm{G}_2$ is uniformly distribution on $S^{d-1}$. We need to show that $\langle X^{\otimes 2} - \mathbb{E} X^{\otimes 2}, u \rangle$ is sub-exponential for all unit vectors $u \in \mathbb{R}^{d^2}$. Reshape $u$ into a $d \times d$ matrix $A$ in which $a_{ij} = \langle u, e_i \otimes e_j \rangle$. Hence
\[
\langle X^{\otimes 2} - \mathbb{E} X^{\otimes 2}, u \rangle 
~=~ \sum_{i,j = 1}^d (x_ix_j - \mathbb{E} x_ix_j) \langle u, e_i \otimes e_j \rangle  
% ~=~ \sum_{i,j = 1}^d x_ix_ja_{ij} - \mathbb{E}\sum_{i,j = 1}^d x_ix_j a_{ij} 
~=~ \trans{X}AX - \mathbb{E}\trans{X}AX.
\]
We want to bound the tail probability,
\[
\Pr{|\langle X^{\otimes 2} - \mathbb{E} X^{\otimes 2}, u \rangle| \geq \frac{t}{d}} 
~=~ \Pr{| \trans{X}AX - \mathbb{E}\trans{X}AX| \geq \frac{t}{d}} ~=~ 
\Pr{\left \vert \frac{\trans{G}AG}{\|G\|_2^2} - \mathbb{E}\frac{\trans{G}AG}{\|G\|_2^2} \right \vert \geq \frac{t}{d}}
\]
Notice that $\|G\|_2$ is well concentrated around $\sqrt{d}$. Thus the event $E:= \{ \|G\|_2 \geq \sqrt{d}/2\}$ is very likely and its complement $\mathbb{P}(E^c)\leq 2 \exp(-c_1 \cdot d)$. Then 
\begin{align*}
\Pr{\abs{\frac{\trans{G}AG}{\|G\|_2^2} - \mathbb{E}\frac{\trans{G}AG}{\|G\|_2^2} } \geq \frac{t}{d}} 
&~\leq~ \Pr{ \inbraces{ \abs{\frac{\trans{G}AG}{\|G\|_2^2} - \mathbb{E}\frac{\trans{G}AG}{\|G\|_2^2} } \geq \frac{t}{d} } \wedge E} + \mathbb{P} (E^c) \\
&~\leq~ \Pr{\abs{\trans{G}AG - \mathbb{E}\trans{G}AG }  \geq \frac{t}{4} } + 2 \cdot \exp(-c_1 \cdot d) \\
&~\leq~ 2 \exp \left (-c_0 \cdot \min ( t^2, t )\right) + 2 \exp(-c_1 \cdot d)\mcom
\end{align*}
where the last inequality is due to \cref{hanson-wright} with $\|A\| \leq \|A\|_F = 1$ and $K = \max_i \|G_i\|_{\psi_2} = 1$. 

For the first term in the last inequality, take $c_2$ such that $2\exp(-c_2) \geq 1$ and let $c' = \min(c_0,c_2)$. The first term is bounded by $2\exp(-c'\cdot t)$. 
For the second term in the last inequality, notice that $|\langle X^{\otimes 2}, u \rangle| \leq \|X^{\otimes 2}\|_2\|u\|_2 = 1$. This implies that $t \leq 2d$, otherwise the tail probability $\mathbb{P} \left(|\langle X^{\otimes 2} - \mathbb{E} X^{\otimes 2}, u \rangle| > 2 \right) = 0$. If $t \leq 2d$, then $2 \exp(-c_1 \cdot d) \leq 2 \exp(-(c_1/2) \cdot t)$. Now let $c = \min(c_1/2, c')$ and substitute $\theta = t/d \geq 0$. It follows that
$$
\mathbb{P} \left(|\langle X^{\otimes 2} - \mathbb{E} X^{\otimes 2}, u \rangle| \geq \theta \right) \leq 4 \exp(-c \cdot \theta d).
$$
It is a standard argument (see e.g. \cite{Ver18} Proposition 2.7.1) that if the tail probability is bounded by $2\exp(-c\theta d)$ for all $\theta \geq 0$, then $\|X^{\otimes 2} - \mathbb{E} X^{\otimes 2} \|_{\psi_1} \leq C/d$.
\end{proof}
With the above lemma, we can now apply a result of Adamczak \etal~\cite{ALPTJ10a} which yields bounds on the singular values of a random matrix $Y$ with columns of $Y$ being independent sub-exponential random variables. We present a simplified version of Theorem 3.13 in \cite{ALPTJ10a} below.
%
% Here we present a simplified version \footnote{See e.g. Theorem 1 Remarks 2, and Theorem 2 in \cite{ALPTJ10b} for other versions of Theorem 3.13 \cite{ALPTJ10a}.} of the Theorem 3.13 in \cite{ALPTJ10a} by setting some parameters to specific values and adding a boundedness assumption on $\max_{1 \leq i \leq N} \|X_i\|_2$. Let $m = N$, $K = 1$, and choose $l$ according to the "in particular" part of the Theorem. We immediately have the following:
\begin{theorem}[\cite{ALPTJ10a}, Simplified version of Theorem 3.13] 
\label{singular value}
Let $Y_1, \dots, Y_N$ be independent random vectors in $\mathbb{R}^D$ such that $\max_{1\leq i \leq N}\|Y_i\|_{\psi_1} \leq \psi$ and $\max_{1 \leq i \leq N} \|Y_i\|_2 \leq 1$. Let $Y$ be a random $D \times N$ matrix whose columns are $Y_i$'s. Then 
\[
\Pr{ \sigma_1(Y) \leq C\psi \cdot \sqrt{N} + 6 } ~\geq~ 1 - \exp(-c \sqrt{N}) \mper   
\]
\end{theorem}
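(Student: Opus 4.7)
The plan is to bound $\sigma_1(Y) = \sup_{u \in S^{D-1}} \|Y^{\mathsf{T}} u\|_2$ via a combination of a net argument and concentration for sums of independent sub-exponential random variables. By passing to a $1/2$-net $\calN \subset S^{D-1}$ of cardinality at most $5^D$, it suffices to show that for each fixed $u \in \calN$, the quantity $\|Y^{\mathsf{T}} u\|_2^2 = \sum_{i=1}^N \langle Y_i, u\rangle^2$ is at most a constant times $(C\psi\sqrt{N} + 6)^2$, and then apply a union bound. For a fixed direction $u$, the random variables $Z_i(u) := \langle Y_i, u\rangle$ are independent, satisfy $|Z_i(u)| \le \|Y_i\|_2 \le 1$, and are sub-exponential with $\|Z_i(u)\|_{\psi_1} \le \psi$, so the task reduces to a concentration inequality for $\sum_i Z_i(u)^2$ at the scale $\asymp \psi^2 N + 1$.

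A direct application of Bernstein's inequality to $\sum_i Z_i(u)^2$ gives a tail of the form $\exp(-c \min(t^2/(N\psi^4), \sqrt{t}/\psi))$ (since $Z_i(u)^2$ has $\psi_{1/2}$-norm of order $\psi^2$); at the target scale $t \asymp \psi^2 N$ this already yields a failure probability $\exp(-c\sqrt{N})$ for each fixed direction, matching the final probability in the statement. The difficulty is that $\exp(-c\sqrt{N})$ is far too weak to absorb the union bound over $5^D$ directions when $D$ is large---in the ellipsoid-fitting application one has $D = d^2$ while $\sqrt{N} \asymp d$---so a naive net union bound does not close the argument.

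To circumvent this, following Adamczak \etal~\cite{ALPTJ10a}, I would use a truncation-and-deletion strategy: pick a threshold $\tau$ and split $Z_i(u) = Z_i(u) \1{|Z_i(u)| \le \tau} + Z_i(u) \1{|Z_i(u)| > \tau}$. For the truncated part, the summands are bounded by $\tau$, enabling a Hoeffding/Bernstein bound with Gaussian-type tails strong enough to survive the union bound over the net. For the atypical ``heavy'' indices $J(u) := \{i : |Z_i(u)| > \tau\}$, one uses $|Z_i(u)| \le 1$ so their contribution to $\sum_i Z_i(u)^2$ is at most $|J(u)|$; tuning $\tau$ so that $|J(u)|$ stays $O(1)$ uniformly in $u$ yields the $O(1)$ additive term in the final bound. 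The main obstacle, which is the technical heart of the Adamczak \etal argument, is making the deletion uniform in $u$: the heavy set $J(u)$ depends on $u$, so the deletion must be combined with a chaining or combinatorial bound on how many heavy columns can simultaneously align with any direction in the net. The final $\exp(-c\sqrt{N})$ tail reflects the delicate balance between the truncation level (which must be small enough for Gaussian concentration on the net) and the tail probability of encountering too many heavy columns.
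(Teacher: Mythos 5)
The paper does not prove this statement; it is cited directly from Adamczak, Litvak, Pajor, and Tomczak-Jaegermann \cite{ALPTJ10a}, and the paper's only engagement with its proof is the extraction of the off-diagonal estimate in \cref{off-diagonal}. As the paper recounts there, the argument in \cite{ALPTJ10a} goes through the Gram matrix: one bounds $\norm{\trans{Y}Y - diag(\trans{Y}Y)}$ by $C\sigma_1(Y)\psi\sqrt{N}$, observes $\norm{diag(\trans{Y}Y)} \leq 1$ (since $\norm{Y_i}_2 \leq 1$), and solves the resulting quadratic inequality $\sigma_1(Y)^2 \leq C\sigma_1(Y)\psi\sqrt{N} + 1$. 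This is structurally different from your proposal, which attacks $\sigma_1(Y)^2 = \sup_{u} \sum_i \langle Y_i,u\rangle^2$ directly via a net on $S^{D-1}$.

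Your sketch also contains a concrete gap in the truncation step. You want a single threshold $\tau$ such that the heavy set $J(u) = \{i : |\langle Y_i,u\rangle| > \tau\}$ has $|J(u)| = O(1)$ uniformly over a $5^D$-point net. Since $\Pr{|\langle Y_i,u\rangle| > \tau} \approx \exp(-c\tau/\psi)$, a union bound giving $\Pr{|J(u)|\geq k} \leq 5^{-D}$ with $k=O(1)$ forces $\tau \gtrsim \psi D$; but $|\langle Y_i,u\rangle| \leq \norm{Y_i}_2 \leq 1$ constrains $\tau\leq 1$, so this is only possible when $\psi \lesssim 1/D$. In the regime of interest --- and in the paper's own application, where $\psi \asymp 1/d$ while $D = d^2$ --- one has $\psi D \asymp d \gg 1$, so no admissible $\tau$ exists. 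Relaxing $k$ instead drives the heavy contribution $\sum_{i\in J(u)} \langle Y_i,u\rangle^2 \leq |J(u)|$ up to order $\psi D$, which for small $\psi$ dwarfs the target $(C\psi\sqrt{N}+6)^2$. The single-net, single-threshold framing therefore cannot be rescued by tuning parameters; the ``chaining or combinatorial bound'' you invoke in the final sentence is not a refinement of that framing but a replacement for it, and it is the multi-scale truncation and decoupling machinery of \cite{ALPTJ10a} (applied to the off-diagonal Gram matrix, not to $\sum_i\langle Y_i,u\rangle^2$) that actually carries the argument.
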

For our application, we will actually need a bound on the singular values which decreases with $\psi$, and it is in fact easy to check that the additive term of 6 in their bound arises from diagonal entries of the matrix $\trans{Y}{Y}$, which can be $\Omega(1)$ if the columns are unit vectors. We will need the following version of their result, bounding the contribution of the off-diagonal terms.
\begin{lemma} [Off-diagonal estimate] 
\label{off-diagonal}
Let $Y_1, \dots, Y_N$ be independent random vectors in $\mathbb{R}^D$ such that $\max_{1\leq i \leq N}\|Y_i\|_{\psi_1} \leq \psi$ and $\max_{1 \leq i \leq N} \|Y_i\|_2 \leq 1$. Let $Y$ be a random $n \times N$ matrix whose columns are $Y_i$'s. Then 
\[
\Pr{ \norm{\trans{Y}Y - diag(\trans{Y}Y)} \leq  C \max \{ \psi^2 \cdot N, \psi \cdot \sqrt{N}\} } ~\geq~ 1- \exp( - c \cdot \sqrt{N}) \mper
\]
\end{lemma}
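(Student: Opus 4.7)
The plan is to adapt the chaining argument of Adamczak et al.\ used to prove \cref{singular value}, while carefully tracking the contribution of the diagonal entries of $\trans{Y}Y$ to the additive $+6$ term in that bound. Since each diagonal entry $(\trans{Y}Y)_{ii} = \norm{Y_i}_2^2$ is at most $1$, so $\norm{\diag(\trans{Y}Y)} \leq 1$, removing the diagonal should eliminate this additive contribution and leave only the $\psi$-dependent scale $\max\{\psi^2 N, \psi\sqrt{N}\}$ appearing in the statement.

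First I would reduce the operator norm bound to control of a scalar quadratic form on an $\varepsilon$-net of $S^{N-1}$. Using a standard $\tfrac14$-net $\mathcal{N}$ of cardinality at most $9^N$, it suffices to show that for each fixed unit vector $u \in \R^N$,
\[
\Pr{\left|\sum_{i \neq j} u_i u_j \ip{Y_i}{Y_j}\right| \geq C \cdot \max\{\psi^2 N,\, \psi\sqrt{N}\}} ~\leq~ \exp(-c N),
\]
and then to take a union bound over $\mathcal{N}$. To handle the fixed form, apply the decoupling inequality of de la Pe\~na--Gin\'e to pass from $\sum_{i \neq j} u_i u_j \ip{Y_i}{Y_j}$ to $\ip{S}{S'}$, where $S = \sum_i u_i Y_i$ and $S' = \sum_j u_j Y_j'$ for independent copies $\{Y_j'\}$ of $\{Y_j\}$. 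Conditioning on $S'$, the sum $\ip{S}{S'} = \sum_i u_i \ip{Y_i}{S'}$ is a weighted sum of independent mean-zero sub-exponentials with $\norm{\ip{Y_i}{S'}}_{\psi_1} \leq \psi \norm{S'}_2$, so Bernstein's inequality gives
\[
\Pr{|\ip{S}{S'}| \geq t \mid S'} ~\leq~ 2 \exp\left(-c \min\left(\frac{t^2}{\psi^2 \norm{S'}_2^2},\ \frac{t}{\psi \norm{S'}_2 \norm{u}_\infty}\right)\right).
\]

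To bound $\norm{S'}_2$, apply \cref{singular value} to $Y'$: with probability at least $1 - \exp(-c\sqrt{N})$,
\[
\norm{S'}_2 ~\leq~ \norm{Y'} ~\leq~ C\psi\sqrt{N} + 6 ~\leq~ C' \cdot \max\{\psi\sqrt{N},\, 1\}.
\]
Plugging this in with $t = C_1 \max\{\psi^2 N,\, \psi\sqrt{N}\}$, a direct check in each of the two regimes $\psi \sqrt{N} \geq 1$ and $\psi\sqrt{N} < 1$ shows that the quadratic term in the Bernstein exponent dominates, yielding failure probability $\exp(-c_2 N)$, which is enough to absorb both the $9^N$-size union bound over $\mathcal{N}$ and the $\exp(-c\sqrt{N})$ failure probability arising from the bound on $\norm{Y'}$.

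The main obstacle will be handling unit vectors $u$ whose $\norm{u}_\infty$ is substantially larger than $1/\sqrt{N}$: there the linear (sub-exponential) regime of Bernstein dominates and the clean $\exp(-cN)$ estimate above breaks down. This is the same sparsity issue that drives the chaining argument of \cite{ALPTJ10a}, and I would handle it analogously, by slicing $u$ dyadically according to the magnitudes of its coordinates into a ``regular'' part (small, well-spread entries) and a ``peaky'' part (few large entries), and treating the pieces separately. For very sparse $u$ the off-diagonal form $\sum_{i \neq j} u_i u_j \ip{Y_i}{Y_j}$ is small precisely because the $u_i u_j$ weight is concentrated on the now-zeroed-out diagonal, which lets the union bound push through with only a constant loss.
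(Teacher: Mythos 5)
Your route is genuinely different from the paper's, and much heavier. The paper does essentially no new probabilistic work here: it observes that the proof of Theorem 3.13 in \cite{ALPTJ10a} already establishes, as an intermediate step, the bound $\norm{\trans{Y}Y - diag(\trans{Y}Y)} \leq C_0\,\sigma_1(Y)\cdot \psi\sqrt{N}$ with probability $1-\exp(-c_0\sqrt{N})$ (the additive $6$ in \cref{singular value} enters only through the separately-handled diagonal term $\norm{diag(\trans{Y}Y)}\leq 1$), and then simply multiplies this by the bound $\sigma_1(Y)\leq C\psi\sqrt{N}+6$ from \cref{singular value} to get $C_1\max\{\psi^2N,\psi\sqrt{N}\}$. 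What you propose --- decoupling, Bernstein conditionally on $S'$, a net, and a dyadic slicing of $u$ by coordinate magnitude --- is essentially a reconstruction of the Adamczak et al.\ argument itself, so if carried out it proves the lemma, but it re-derives machinery the paper deliberately treats as a black box.

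There is, however, a real soft spot in your sketch, and it sits exactly at the step you defer. Your stated reason that peaky vectors are harmless --- that for very sparse $u$ the weight of $u\trans{u}$ ``is concentrated on the now-zeroed-out diagonal'' --- is false: for $u=(e_i+e_j)/\sqrt2$ the off-diagonal form equals $\ip{Y_i}{Y_j}$, which carries half the mass of $u\trans{u}$ and is sub-exponential at scale $\psi$, so it exceeds $C\psi\sqrt{N}$ with probability roughly $\exp(-c\sqrt{N})$ and no better. This is precisely why the lemma (and \cite{ALPTJ10a}) can only promise $1-\exp(-c\sqrt{N})$ rather than $1-\exp(-cN)$. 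The correct resolution for sparse slices is an entropy-versus-tail trade-off: a $k$-sparse piece of the net has cardinality only about $\binom{N}{k}9^k$, and the weaker tail available from the linear regime of Bernstein (roughly $\exp(-c\min\{k,\sqrt{N}\})$ at the target threshold, since $\norm{S'}_2=O(1)$ there) still beats this reduced entropy. Since this trade-off is the entire content of the chaining argument and your sketch both misstates its mechanism and leaves it to ``analogous to \cite{ALPTJ10a}'', the proposal as written has a gap at its hardest step, even though the overall plan is salvageable.
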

\begin{proof}
We will use the tail bound for $\sigma_1(Y)$ from \cref{singular value} to obtain a tail bound for $\|\trans{Y}Y - diag(\trans{Y}Y)\|$. 
In the proof of Theorem 3.13 \cite{ALPTJ10a}, $\norm{\trans{Y}Y - diag(\trans{Y}Y)}$ and $\norm{diag(\trans{Y}Y)}$ are estimated separately in terms of $\sigma_1(Y)$. They are combined at the last step to give the bound on $\sigma_1(Y)$ by the triangle inequality
 $\|\trans{Y}Y\| \leq \|\trans{Y}Y - diag(\trans{Y}Y)\| + \|diag(\trans{Y}Y)\|$ together with an approximation argument. 
To simplify the bound presented in the proof of Theorem 3.13 \cite{ALPTJ10a}, we set the parameters exactly the same as in \cref{singular value}:
\begin{equation}
\label{eq2}
\Pr{ \norm{\trans{Y}Y - diag(\trans{Y}Y)} \leq  C_0 \sigma_1(Y) \cdot \psi \sqrt{N} } ~\geq~ 1 - \exp(-c_0 \cdot \sqrt{N}).
\end{equation}
Let $E_1 := \{\|\trans{Y}Y - diag(\trans{Y}Y)\| \leq  C_0 \sigma_1(A) \cdot \psi \sqrt{N} \}$ and $E_2 := \{ \sigma_1(Y) \leq C\psi \cdot \sqrt{N} + 6  \}$. 
\[
\mathbb{P}(E_1 \wedge E_2) ~=~ \mathbb{P}(E_1) - \mathbb{P}(E_1 \wedge E_2^c) ~\geq~ \mathbb{P}(E_1) - \mathbb{P}(E_2^c)
\]
Substituting the probability bounds from \cref{singular value} and \cref{eq2}, we have
\[
\norm{\trans{Y}Y - diag(\trans{Y}Y) } \leq  C_0  \cdot \psi \sqrt{N} \cdot (C  \psi \sqrt{N} + 6) ~\leq~ C_1 \cdot \max \{ \psi^2 N, \psi \sqrt{N}\} 
\]
with high probability. 
\end{proof} 
We can now apply the bound from the above lemma to control the spectral norm of $M^{-1}$ for the Gram matrix $M$. We denote the event that $\norm{M^{-1}} \leq 3$ by $E_1$.
\begin{lemma} [Operator norm bound: \underline{Event $E_1$}]
\label{operator}
Let $M$ be the matrix defined as $M_{ij} = \ip{X_i^{\otimes 2}}{X_j^{\otimes 2}}$, for $X_1, \ldots, X_n \sim \text{Unif}(S^{d-1})$. 
Then, for $n = d^2/C$ for a sufficient large constant $C$, the matrix $M$ satisfies $M \succeq \frac13 \cdot I_n + \frac{1}{d} \cdot J_n$, and thus $\|M^{-1}\| \leq 3$, with probability at least $1 - \exp(-c \cdot d)$.  
\end{lemma}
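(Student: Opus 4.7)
The plan is to decompose each $X_i^{\otimes 2}$ around its mean and then apply \cref{off-diagonal} to the centered vectors. Let $\mu := \mathbb{E}[X^{\otimes 2}]$; since $X \sim \text{Unif}(S^{d-1})$ has $\mathbb{E}[x_i x_j] = \delta_{ij}/d$, we get $\mu = (1/d) \cdot \mathrm{vec}(I_d)$ and $\norm{\mu}_2^2 = 1/d$. Set $Y_i := X_i^{\otimes 2} - \mu$ and let $Y$ be the $d^2 \times n$ matrix with columns $Y_1, \dots, Y_n$. A short calculation using $\ip{X_i^{\otimes 2}}{\mu} = (1/d) \cdot \norm{X_i}_2^2 = 1/d = \norm{\mu}_2^2$ shows the cross terms in $\ip{Y_i + \mu}{Y_j + \mu}$ cancel, giving
\[
M_{ij} ~=~ \ip{Y_i}{Y_j} + \frac{1}{d} \mcom \qquad \text{i.e.,} \qquad M ~=~ \trans{Y}Y + \frac{1}{d} \cdot J_n \mper
\]
Since $(1/d) \cdot J_n \succeq 0$, it suffices to prove $\trans{Y}Y \succeq (1/3) \cdot I_n$ with the claimed probability.

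A second immediate observation is that the diagonal of $\trans{Y}Y$ is \emph{deterministic}: $\norm{Y_i}_2^2 = \norm{X_i^{\otimes 2}}_2^2 - 2\ip{X_i^{\otimes 2}}{\mu} + \norm{\mu}_2^2 = 1 - 2/d + 1/d = 1 - 1/d$. In particular $\norm{Y_i}_2 \leq 1$, so \cref{off-diagonal} applies to $Y$ directly (no rescaling needed) with $\psi = C'/d$ supplied by \cref{subexponential}. Setting $N = n = d^2/C$ yields $\max\{\psi^2 N,\, \psi \sqrt{N}\} = O(1/\sqrt{C})$ for $C$ sufficiently large, so with probability at least $1 - \exp(-c\sqrt{N}) \geq 1 - \exp(-c' \cdot d)$,
\[
\norm{\trans{Y}Y - (1 - 1/d) \cdot I_n} ~=~ \norm{\trans{Y}Y - \mathrm{diag}(\trans{Y}Y)} ~\leq~ C_1/\sqrt{C} \mper
\]

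To conclude, I would choose $C$ large enough that $C_1/\sqrt{C} \leq 1/3$, so that (for $d \geq 3$) we have $\trans{Y}Y \succeq (1 - 1/d - 1/3) \cdot I_n \succeq (1/3) \cdot I_n$, and hence $M \succeq (1/3) \cdot I_n + (1/d) \cdot J_n$, yielding $\norm{M^{-1}} \leq 3$. The main obstacle is entirely bookkeeping of constants: $C$ must be taken large enough to force the off-diagonal perturbation from \cref{off-diagonal} to be strictly smaller than the deterministic diagonal mass $1 - 1/d$, and this is precisely the origin of the large universal constant in the bound $n \leq d^2/C$. No further concentration argument is needed, because the identity $\norm{X_i^{\otimes 2}}_2 = 1$ makes the diagonal of $\trans{Y}Y$ exact rather than merely concentrated, which is the key simplification that this approach exploits.
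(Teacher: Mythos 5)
Your proof is correct and follows essentially the same route as the paper: center each $X_i^{\otimes 2}$ to get $Y_i$, observe $M = \trans{Y}Y + \tfrac{1}{d}J_n$ with a deterministic diagonal $\mathrm{diag}(\trans{Y}Y) = (1-1/d)I_n$, and invoke \cref{subexponential} together with \cref{off-diagonal} to make the off-diagonal part small for $n = d^2/C$. (As a side note, your sign in $M = \trans{Y}Y + \tfrac{1}{d}J_n$ is the correct one; the paper's display writes $\trans{Y}Y = M + \tfrac{1}{d}J_n$, an apparent sign slip, though the identity $\trans{Y}Y - \mathrm{diag}(\trans{Y}Y) = M - \mathbb{E}M$ that the paper actually uses is unaffected.)
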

\begin{proof}
Let $Y_i = X_i^{\otimes 2} - \expop X_i^{\otimes 2}$, where $\expop X_i^{\otimes 2} = \frac{1}{d}\sum_{r=1}^d e_r^{\otimes 2}$. Note that
\[
\trans{Y}{Y} ~=~ M + \frac{1}{d} \cdot J_n 
\qquad  \text{and} \qquad 
\expop M ~=~ \inparen{1 - \frac{1}{d}} \cdot I_n + \frac{1}{d} \cdot J_n \mcom
\]
where $J_n$ denotes the all-ones matrix in $\R^{n \times n}$. Since the diagonal entries of $M$ are 1, we also get that $diag(\trans{Y}{Y}) = \inparen{1 - \frac{1}{d}} \cdot I_n $, and thus $\trans{Y}{Y} - diag(\trans{Y}{Y}) = M - \expop M$.
%
% Notice that $M_{ij} = \langle v_i, v_j \rangle^2$ is the Gram matrix of vectors $v_i^{\otimes 2}$. We construct another Gram matrix $B = Y^TY$ where the columns of $Y$ are centered vectors $v_i^{\otimes 2} - \mathbb{E}v_i^{\otimes 2}$. The entries of $B$ are related to that of $M$ in the following way,
% \begin{align*}
% B_{ij} &= \left \langle v_i^{\otimes 2} - \mathbb{E}v_i^{\otimes 2}, \hspace{5pt} v_j^{\otimes 2} - \mathbb{E}v_i^{\otimes 2} \right \rangle  \\
% &= \left \langle v_i^{\otimes 2}, \hspace{5pt} v_j^{\otimes 2} \right \rangle  - \left \langle v_i^{\otimes 2}, \hspace{5pt} \frac{1}{d}\sum_{i = 1}^d e_i^{\otimes 2} \right \rangle - \left \langle \frac{1}{d}\sum_{i = 1}^d e_i^{\otimes 2}, \hspace{5pt} v_j^{\otimes 2}  \right \rangle  + \left \langle \frac{1}{d}\sum_{i = 1}^d e_i^{\otimes 2}, \hspace{5pt} \frac{1}{d}\sum_{i = 1}^d e_i^{\otimes 2} \right \rangle \\
% &= M_{ij} - 1/d,
% \end{align*}
% where $e_i$'s are standard basis. Since $\mathbb{E} \langle v_i, v_i\rangle^2 = 1$ and $\mathbb{E} \langle v_i, v_j\rangle^2 = 1/d$ for $i\neq j$, $M - \mathbb{E} M$ has $M_{ii} - 1 = 0$ on the diagonal and $M_{ij} - 1/d$ for all off-diagonal entries. It follows that $Y^TY - diag(Y^TY) = M - \mathbb{E}M$. Let $Y_i = v_i^{\otimes 2} - \mathbb{E}v_i^{\otimes 2}$ be the columns of $Y$. 

By \cref{subexponential}, $\inbraces{Y_i}_{i \in [n]}$ are independent sub-exponential random vectors with $\|Y_i\|_{\psi_1} \leq C/d$. 
% 
% Furthermore, $Y_i$'s are independent columns of $Y$, since $v_i$'s are independent. 
% 
Thus \cref{off-diagonal} implies that
\[
\Pr{ \|\trans{Y}{Y} - diag(\trans{Y}{Y})\| \leq  C \max \left \{ \frac{n}{d^2}, \frac{\sqrt{n}}{d} \right \} } 
~\geq~ 
1- \exp( - c\sqrt{n}).
\]
Choose $n = d^2/C_1$ for large enough $C_1$ such that $C \sqrt{n} / d \leq 1/2$. We have that with probability at least $1-\exp(-c \cdot d)$,
\[
\|\trans{Y}Y - diag(\trans{Y}Y)\| ~=~ \| M - \mathbb{E}M\| ~\leq~  1/2 \mper
\]
This implies
\[
M ~\succeq~ \inparen{\frac12 - \frac{1}{d}} \cdot I_n + \frac{1}{d} \cdot J_n ~\succeq~ \frac13 \cdot I_n
\]
which gives $\norm{M^{-1}} \leq 3$.
%
% We have $\|Y^TY - diag(Y^TY)\| \leq 1/2$ with probability $1-\exp(-cd).$ Since bounding $\| M - \mathbb{E}M\|$ is equivalent to bound $\|Y^TY - diag(Y^TY)\|$, we also have $\| M - \mathbb{E}M\| \leq 1/2$ with the same probability.
%
% As we discussed above, $\mathbb{E}M = (1 - 1/d)I_d + 1/d \mathbf{11}^T $ where $\mathbf{11}^T$ is the all-ones matrix. $\lambda_{min}(\mathbb{E}M) = 1 - 1/d$. Now we can estimate the least eigenvalue of $M$ by Weyl's inequality,
% $$
% \lambda_{min}(M) = \lambda_{min}(M - \mathbb{E}M + \mathbb{E}M ) 
% \geq \lambda_{min}(M - \mathbb{E}M) + \lambda_{min}(\mathbb{E}M ) 
% \geq 1 - \frac{1}{d} - \frac{1}{2} 
% \geq \frac{1}{3},
% $$
% with sufficiently large $d$. Therefore, $M$ is invertible and and $\|M^{-1}\| \leq 1/\lambda_{min}(M) \leq 3$ with high probability.
%
\end{proof}

%%%%%%%%%%%%%%%%%%%%%%%%%%%%%%%%%%%%%%%%%%%%%%%%%%%%%%%%%%%%%%%%%%%%%%%%%%%%%%%%%%%%
\subsection{Estimates for deviations in norms}
We next develop some bounds for the random variables $\eps_i = \frac{1}{\norm{G_i}_2^2} - 1$, where $G_i \sim \gaussian{0}{\nfrac{1}{d}\cdot I_d}$. 
Together with the bounds for $\norm{M^{-1}}$ developed above, these will help us analyze the perturbation depending on the vector $\delta = M^{-1} \epsilon$. We also note that the $M$ and $\epsilon$ are independent random variables.
\begin{lemma} [Sub-gaussian behavior of $\epsilon$]
\label{subgaussian}
The independent random variables $\epsilon_1, \ldots, \epsilon_n$ all satisfy
\begin{enumerate}
\item $\Ex{\eps_i} = \frac{2}{d-2}$ ~and~ $\Var{\eps_i} = O(\frac{1}{d})$.
\item $\Pr{\abs{\eps_i - \expop \eps_i} \geq t} ~\leq~ 2 \cdot \exp\inparen{-t^2 \cdot d/32}$ for all $t \leq 1/2$.
\end{enumerate}
\end{lemma}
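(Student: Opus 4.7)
The plan is to use the observation that $Z_i := \|G_i\|_2^2$ has the scaled chi-squared distribution $\chi_d^2/d$, so that $\epsilon_i = 1/Z_i - 1$ is (up to a shift) a scaled inverse chi-squared random variable. Both items then reduce to classical facts about the chi-squared distribution and to standard concentration estimates for $\chi_d^2$.

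For part (1), I would compute the mean and variance of $\epsilon_i$ directly from the standard moment identities $\Ex{1/\chi_d^2} = 1/(d-2)$ and $\Var{1/\chi_d^2} = 2/((d-2)^2 (d-4))$, valid for $d > 4$. These yield $\Ex{\epsilon_i} = d/(d-2) - 1 = 2/(d-2)$ and $\Var{\epsilon_i} = 2 d^2 / ((d-2)^2 (d-4)) = O(1/d)$, matching the claim exactly.

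For part (2), my plan is to transport a Laurent--Massart chi-squared tail bound from $Z_i$ to $\epsilon_i = 1/Z_i - 1$, and then correct for the small bias $\mu := \Ex{\epsilon_i} = 2/(d-2)$. Laurent--Massart yields $\Pr{|Z_i - 1| \geq s} \leq 2 \exp(-c_1 s^2 d)$ for an explicit constant $c_1$ and all $s \in (0,1]$. On the good event $A := \{|Z_i - 1| \leq 1/2\}$ the map $z \mapsto 1/z - 1$ is at most $2$-Lipschitz at $z = 1$, so $|\epsilon_i| \leq 2|Z_i - 1|$ on $A$. The event $\{|\epsilon_i - \mu| \geq t\}$ is contained in $\{|\epsilon_i| \geq t - \mu\}$, and combining this with the Lipschitz inequality on $A$ reduces the desired tail bound to a chi-squared tail bound applied at parameter $\approx (t-\mu)/2$; the contribution from $A^c$ is at most $\exp(-cd)$ and gets absorbed into the target bound in the nontrivial regime.

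The hard part will be the bookkeeping rather than any real technical obstacle. For $t \lesssim 1/\sqrt{d}$ the target bound $2\exp(-t^2 d/32) \geq 1$ is trivially satisfied, while in the complementary regime $t \gtrsim 1/\sqrt{d}$ the bias $\mu = O(1/d)$ is at most a small fraction of $t$, so the reduction loses only a constant factor. I would carefully combine the Laurent--Massart constant, the Lipschitz constant of $z \mapsto 1/z$ near $1$, and the shift by $\mu$ to verify that the exponent in the resulting bound can indeed be made at least $t^2 d/32$ for all $t \leq 1/2$.
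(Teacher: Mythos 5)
Your proposal is correct and follows essentially the same route as the paper: part (1) is the identical inverse-$\chi^2$ moment computation, and part (2) likewise reduces the tail of $\eps_i$ to a $\chi^2_d$ concentration bound while disposing of the bias $\mu=2/(d-2)$ by noting the bound is trivial for small $t$ (the paper inverts the event $\{|\eps_i-\mu|\geq t\}$ exactly into a statement about $\|G\|_2^2-d$, whereas you use a Lipschitz estimate on a good event, but this is only a bookkeeping difference). The one point to watch is that your route naturally yields exponent $(t-\mu)^2 d/32$ rather than $t^2 d/32$, which you correctly flag and which is harmless since in the nontrivial regime $t\gtrsim 1/\sqrt{d}$ the loss is a $1-o_d(1)$ factor (and the precise constant is immaterial downstream).
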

\begin{proof}
We fix an $i \in [n]$ and write $\eps_i = \frac{d}{\norm{G}^2_2} - 1$ for $G \sim \gaussian{0}{I_d}$. The random variable $1/\norm{G}^2_2$ has the inverse-$\chi^2$ distribution with $d$ degrees of freedom, for which the moments are known to be $\Ex{(1/\norm{G}^2_2)^k} = 2^{-k} \cdot \frac{\Gamma((d/2) - k)}{\Gamma(d/2)}$ when $k \leq d/2$ ~\cite{wiki1, wiki2}. A direct computation yields
\[
\Ex{\eps_i} ~=~ \frac{2}{d-2} \qquad \text{and} \qquad \Var{\eps_i} ~=~ \frac{2 \cdot d^2}{(d-2)^2 \cdot (d-4)} ~=~ O(1/d) \mper
\]
Let $\epsilon_0 = \mathbb{E}\epsilon_i$ and $Z_i = \epsilon_i - \mathbb{E}\epsilon_i = \epsilon_i - \epsilon_0 $. We want to bound the tail probability of $Z_i$. We have
\[
\Pr{Z_i \geq t} 
~=~ \Pr{ \frac{d}{\|G\|^2_2} - 1 \geq t + \epsilon_0}
~=~ \Pr{ \|G\|^2_2 - d \leq -\frac{d(t + \epsilon_0)}{1+t+\epsilon_0} },
\]
and
\[
\Pr{Z_i \leq -t} 
~=~ \Pr{ \frac{d}{\|G\|^2_2} - 1 \leq -t + \epsilon_0 }
~=~ \Pr{ \|G\|^2_2 - d \geq \frac{d(t - \epsilon_0)}{1-t+\epsilon_0} }.
\]
The tail bound for $\chi^2$ distribution (see \cite{Ver18} Theorem 3.1.1) implies that
\[
\Pr{|Z_i| \geq t} ~\leq~ 2 \cdot \exp \left( -\frac{d}{8} \min \left \{ \frac{t - \epsilon_0}{1 - t + \epsilon_0}, \frac{t + \epsilon_0}{1 + t + \epsilon_0} \right\}^2  \right).
\]
Using the fact that 
$$
\min \left \{ \frac{t - \epsilon_0}{1 - t + \epsilon_0}, \frac{t + \epsilon_0}{1 + t + \epsilon_0} \right\}^2  ~\geq~ \left(\frac{t}{2} \right)^2, \hspace{10pt} \forall t \in  [2\epsilon_0, 1/2], 
$$
we can simplify the tail bound to
$$
\mathbb{P}\left(|Z_i| \geq t \right) ~\leq~ 2 \exp \left( -\frac{dt^2}{32} \right), \hspace{10pt} \forall t \leq \frac{1}{2},
$$
since the probability is trivial for $t \leq 2\epsilon_0$. 
% The tail bound conditioned on $E_2$ is:
% $$
% \mathbb{P} (|Z_i| \geq t | E_2) = \frac{\mathbb{P} (|Z_i| \geq t \wedge E_2)}{\mathbb{P}(E_2)} \leq \frac{2\exp(-dt^2/32)}{1-o_d(1)} \leq 2\exp(-dt^2/64), \hspace{10pt} \forall t.
% $$
% The $\psi_2$ norm follows directly from the tail bound.
% 
\end{proof}
The above lemma also shows that with high probability $\norm{\eps}_\infty = O(\sqrt{(\log d)/d})$, which we denote as the event $E_2$.
Actually, we will use $E_2$ as a shorthand for $\bigwedge E_{2,i}$, where $E_{2,i}$ is defined by a bound on $|\eps_i|$ (so that the collection of random variables $\inbraces{\eps_i | E_{2,i}}_{i \in [n]}$ remain independent).
%
% high-probability bound on $\norm{\eps}_\infty = \max_{i \in [n]}\abs{\eps_i}$. We denote this event by $E_2$
% %
\begin{corollary}[$\ell_{\infty}$ bound on $\epsilon$: \underline{Event $E_2$}]
\label{bound1}
With probability $1 - d^{-\Omega(1)}$ we have $\norm{\eps}_{\infty} \leq C \sqrt{\log(d)/d}$. Also, conditioned on $E_2$, we have
$\norm{\eps_i - \expop \eps_i}_{\psi_2} \leq C/\sqrt{d}$.
\end{corollary}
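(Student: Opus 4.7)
For the first claim, I would fix an index $i$ and apply the sub-gaussian tail bound from \cref{subgaussian} with $t = C'\sqrt{\log(d)/d}$, which is well within the admissible range $t \le 1/2$ for all sufficiently large $d$. This gives
\[
\Pr{\abs{\eps_i - \expop \eps_i} \ge C'\sqrt{\tfrac{\log d}{d}}} ~\le~ 2 \exp\inparen{-\tfrac{(C')^2 \log d}{32}} ~=~ 2 \cdot d^{-(C')^2/32}\mper
\]
Choosing $C'$ large enough that $(C')^2/32 > 3$, a union bound over $n \le d^2/C$ coordinates shows that $\norm{\eps - \expop \eps}_\infty \le C'\sqrt{\log(d)/d}$ with probability $1 - d^{-\Omega(1)}$. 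Since $\expop \eps_i = 2/(d-2) = O(1/d)$ is of strictly smaller order, this yields $\norm{\eps}_\infty \le C\sqrt{\log(d)/d}$ with the stated probability, upon enlarging the constant $C$ slightly.

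For the second claim, define $E_{2,i}$ to be the event $\abs{\eps_i - \expop \eps_i} \le C'\sqrt{\log(d)/d}$ so that $E_2 = \bigwedge_i E_{2,i}$; since each $E_{2,i}$ depends only on $\eps_i$ and the $\eps_i$ are independent, the conditioned variables $\inbraces{\eps_i \mid E_{2,i}}$ remain independent. Write $Z_i = \eps_i - \expop \eps_i$. Conditioning on $E_{2,i}$ (an event of probability $1 - d^{-\Omega(1)}$) inflates the unconditional tail bound by at most a factor of $(1-d^{-\Omega(1)})^{-1} \le 2$, and makes the tail vanish once $t > C'\sqrt{\log(d)/d}$, so that for every $t \ge 0$
\[
\Pr{\abs{Z_i} \ge t \given E_{2,i}} ~\le~ 4 \exp\inparen{-\tfrac{d t^2}{32}}\mper
\]

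To convert this conditional sub-gaussian tail into a $\psi_2$-norm bound, I would use the standard integral identity
\[
\Ex{e^{Z_i^2/s^2} \given E_{2,i}} ~=~ 1 + \int_0^\infty \frac{2u}{s^2}\, e^{u^2/s^2}\, \Pr{\abs{Z_i} \ge u \given E_{2,i}}\, du\mcom
\]
and plug in the tail bound above. Choosing $s = C/\sqrt{d}$ with $C$ large enough so that $1/s^2 - d/32 < 0$ by a definite margin, the integral converges to a quantity at most $1$, which gives $\Ex{e^{Z_i^2/s^2} \given E_{2,i}} \le 2$, i.e.\ $\norm{\eps_i - \expop \eps_i}_{\psi_2} \le C/\sqrt{d}$ conditional on $E_{2,i}$.

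The only subtlety is that the unconditional tail bound in \cref{subgaussian} is restricted to $t \le 1/2$, so we must use the conditioning to kill contributions from the regime $t > C'\sqrt{\log(d)/d}$; since our chosen threshold is much smaller than $1/2$ for large $d$, the two regimes match up cleanly and the $\psi_2$ estimate follows with no further effort.
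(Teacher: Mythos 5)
Your proposal is correct and follows essentially the same route as the paper: apply the tail bound from \cref{subgaussian} at $t \asymp \sqrt{\log(d)/d}$ with a union bound over the $n$ coordinates for the $\ell_\infty$ claim, then observe that conditioning on the (per-coordinate) event $E_{2,i}$ only inflates the sub-gaussian tail by a $(1-o_d(1))^{-1}$ factor while truncating it beyond the threshold, so the $\psi_2$ bound follows from the standard tail-to-Orlicz-norm equivalence. The only difference is that you spell out the moment-generating-function computation that the paper leaves implicit, and you handle the small mean $\expop\eps_i = O(1/d)$ explicitly, which the paper glosses over; both are sound.
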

\begin{proof}
Using the tail bound from \cref{subgaussian}, and $n \leq d^2$ we have that
\[
\Pr{\exists i \in [n]. ~~\abs{\eps_i} \geq C_0 \cdot \frac{\sqrt{\log d}}{\sqrt{d}}} ~\leq~ d^2 \cdot \exp\inparen{-C_1 \cdot \log d} ~\leq~ d^{-C} \mper
\]
Also, the tail bound for $Z_i = \eps_i - \expop \eps_i$, conditioned on $E_2$ is:
\[
\Pr{|Z_i| \geq t ~|~ E_2} ~=~ \frac{\Pr{|Z_i| \geq t \wedge E_2}}{\Pr{E_2}} ~\leq~ \frac{2\exp(-dt^2/32)}{1-o_d(1)} ~\leq~ 2\exp(-dt^2/64), \hspace{10pt} \forall t.
\]
The $\psi_2$ norm follows directly from the tail bound.
\end{proof}

% \begin{lemma} [$\ell^\infty$ bound on $\epsilon$: \underline{Event $E_2$}]
% \label{bound1}
% For all $1\leq i \leq m$, $|\epsilon_i| \leq \log(d)/\sqrt{d}$ with probability $1 - o_d(1)$.
% \end{lemma}
% \begin{proof}
% We use the Taylor series expansion of $\epsilon_i = 1 / \|x^{(i)}\|^2_2 - 1$ at $\|x^{(i)}\|^2_2 = 1$:
% $$
% \epsilon_i = \frac{1}{1 + (\|x^{(i)}\|_2^2 - 1)} - 1 = - (\|x^{(i)}\|_2^2 - 1) - (\|x^{(i)}\|_2^2 - 1)^2 + O(- (\|x^{(i)}\|_2^2 - 1)^3).
% $$
% For a fixed sample $x^{(i)}$, denote its $j$th coordinate as $x_j$. $\|x^{(i)}\|^2_2 - 1 = \sum_{i = 1}^d (x_j^2 - 1/d)$. The random variable $(x_j^2 - 1/d)$ have zero means and $\psi_1$ norms
% $$
% \|x_j^2 - \mathbb{E}x_j^2\|_{\psi_1} \leq C \|x_j^2\|_{\psi_1} \leq C \|x_j\|_{\psi_2}^2 \leq \frac{C}{d}.
% $$
% The first and second inequalities are due to an analog of lemma 2.6.8 and lemma 2.7.6 in \cite{Ver18}. Berstein's inequality implies that
% $$
% \mathbb{P} \left( |\|x^{(i)}\|_2^2  - 1 | \geq t \right) \leq 2 \exp \left( -c \min \left\{dt^2, dt \right\} \right), \hspace{10pt} \forall t \geq 0.
% $$
% Take $t = \log(d)/\sqrt{d}$. We obtain that $|\|x^{(i)}\|_2^2-1| \leq \log(d)/\sqrt{d}$ with probability $1 - o_d(1)$.
% \end{proof}

\begin{lemma} [$\ell^\infty$ bound on $\delta$: \underline{Event $E_3$}]
\label{bound2}
With probability $1 - o_d(1)$, the vector $\delta = M^{-1} \eps$ satisfies $\norm{\delta}_{\infty} \leq C \cdot (\log d)/\sqrt{d}$.
%
% For all $1\leq i \leq m$, $|\delta_i| \leq \log^2(d)/\sqrt{d}$ with probability $1-o_d(1)$.
\end{lemma}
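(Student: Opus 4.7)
The plan is to exploit two structural facts and treat the mean and fluctuation of $\epsilon$ separately. First, since $G_i = \norm{G_i}_2 \cdot X_i$ with length and direction independent for a standard Gaussian, the matrix $M$ (a function only of the $X_j$'s) is independent of $\epsilon$ (a function only of the norms $\norm{G_j}_2$). Second, conditioning on the high-probability events $E_1$ (so $\norm{M^{-1}} \leq 3$) and $E_2 = \bigwedge_j E_{2,j}$ (which factorizes over coordinates and so preserves independence), the centered entries $Z_j := \epsilon_j - \epsilon_0$ with $\epsilon_0 = 2/(d-2)$ remain independent mean-zero sub-gaussians with $\norm{Z_j}_{\psi_2} \leq C/\sqrt{d}$ by \cref{bound1}. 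Decomposing $\epsilon = \epsilon_0 \one + Z$ splits $\delta = \epsilon_0 M^{-1}\one + M^{-1}Z$, and I would handle the two pieces separately.

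For the deterministic piece $\epsilon_0 M^{-1}\one$, the naive operator-norm bound $\norm{M^{-1}\one}_\infty \leq \norm{M^{-1}}\cdot\norm{\one}_2 = O(\sqrt{n})$ combined with $\epsilon_0 = \Theta(1/d)$ only gives $O(1)$, which is useless. The idea is to exploit the large $\one$-direction eigenvalue of $\Ex M = (1 - 1/d) I_n + (1/d) J_n$, namely $\lambda := 1 - 1/d + n/d = \Theta(d)$ when $n = d^2/C$. Writing $R := M - \Ex M$ with $\norm{R} \leq 1/2$ on $E_1$, the identity $M\one = \lambda\one + R\one$ gives $M^{-1}\one = \lambda^{-1}(\one - M^{-1}R\one)$, so $\norm{M^{-1}\one}_\infty \leq \lambda^{-1}(1 + \norm{M^{-1}R\one}_2) \leq \lambda^{-1}(1 + (3/2)\sqrt{n}) = O(1)$. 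Multiplying by $\epsilon_0 = O(1/d)$ yields $\norm{\epsilon_0 M^{-1}\one}_\infty = O(1/d)$, comfortably within the target.

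For the random piece, write $(M^{-1}Z)_i = \ip{v_i}{Z}$ where $v_i := M^{-1} e_i$ depends only on $M$ and satisfies $\norm{v_i}_2 \leq 3$ on $E_1$. Conditioning on $M$ and on $E_2$, this is a sum of independent mean-zero sub-gaussians against a deterministic vector, so by the standard Hoeffding-type bound for linear combinations of sub-gaussians, $\norm{\ip{v_i}{Z}}_{\psi_2}^2 \leq C^2 \norm{v_i}_2^2/d \leq 9C^2/d$, and the sub-gaussian tail yields $\Pr{\abs{\ip{v_i}{Z}} \geq t} \leq 2\exp(-c d t^2)$. Choosing $t = C' \log(d)/\sqrt{d}$ and union-bounding over $i \in [n] \subseteq [d^2]$ produces a failure probability of $d^{-\Omega(1)} = o_d(1)$. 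Combining with the deterministic estimate gives $\norm{\delta}_\infty \leq C \log(d)/\sqrt{d}$ with probability $1 - o_d(1)$. The main obstacle is the deterministic term: a bare operator-norm bound is lossy by essentially a factor of $d$, and one must exploit the structure of $\Ex M$ via the one-step Neumann-style identity above, using the perturbation bound $\norm{R} \leq 1/2$ already available from the proof of \cref{operator}.
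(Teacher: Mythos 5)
Your proposal is correct and follows essentially the same route as the paper: condition on $E_1 \wedge E_2$, use the independence of $M$ and $\epsilon$, split $\delta = M^{-1}\eps$ into the mean term $\epsilon_0 M^{-1}\one$ and the fluctuation $M^{-1}Z$, and control the latter by sub-gaussian Hoeffding for each coordinate followed by a union bound over $i \in [n]$. The only difference is in one sub-step: for the mean term the paper bounds $\norm{M^{-1}\one}_2$ via the PSD ordering $M \succeq \frac13 I_n + \frac1d J_n$ from \cref{operator}, whereas you use the identity $M^{-1}\one = \lambda^{-1}\inparen{\one - M^{-1}R\one}$ with $R = M - \expop M$; both exploit the same $\Theta(d)$ eigenvalue of $\expop M$ in the $\one$ direction and yield the required $O(1/d)$ bound.
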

\begin{proof}
We condition on events $E_1$ and $E_2$ both of which happen with probability $1 - o_d(1)$. 
We fix the matrix $A = M^{-1}$ satisfying $\norm{A} \leq 3$, and consider $\epsilon$ (which is independent of $M$) with independent entries $\epsilon_1, \ldots, \epsilon_n$ satisfying $\norm{\eps_i - \expop \eps_i}_{\psi_2} \leq C/\sqrt{d}$ by \cref{bound1}. Then $\delta_i = \ip{A_i}{\eps}$, where $A_i$ denotes the $i$-th row of $A$ with $\norm{A_i}_2 \leq \norm{A} \leq 3$. 
By Hoeffding's inequality for independent sub-gaussian variables
\begin{align*}
\Pr{\abs{\delta_i - \expop \delta_i} \geq t}
~=~ \Pr{\abs{\ip{A_i}{\eps - \expop \eps}} \geq t} 
&~\leq~ 2\exp\inparen{- \frac{c \cdot t^2}{\norm{A_i}_2^2 \cdot \max_i \norm{\eps_i - \expop \eps_i}_{\psi_2}^2}} \\
&~\leq~ 2\exp\inparen{- c' \cdot t^2 \cdot d} \mper
\end{align*}
To bound $\norm{\expop \delta}_{\infty} = \norm{M^{-1} \expop \epsilon}_{\infty}$, we note that by \cref{subgaussian}, $\expop \eps = (\nfrac{2}{d-2}) \cdot \mathbf{1}$, and that conditioned on $E_1$ we actually have $M \succeq (1/3) \cdot I_n + (1/d) \cdot J_n$. Thus,
\[
\norm{\expop \delta}_{\infty} 
~=~ \norm{M^{-1} \expop \epsilon}_{\infty}
~\leq~ \frac{2}{d-2} \cdot \norm{M^{-1} \mathbf{1}}_2
~\leq~ \frac{2}{d-2} \cdot \frac{d}{n} \cdot \norm{\mathbf{1}}_2 ~\leq~ \frac{C_1}{\sqrt{n}} \mper
\]
Choosing $t = C'(\log d)/\sqrt{d}$ in the Hoeffding estimate then proves the claim, since $n = d^2/C$.
%
% Let $L$ be the $i$th row of $M^{-1}$. It is a known fact that $\|L\|_2 \leq \|M^{-1}\|_2$. 
% For any fixed $L$ and any $t \geq 0$, conditioned on $E_1$ and $E_2$, we have
% $$
% \mathbb{P} \left( |\delta_i - \mathbb{E} \delta_i | \geq t \right) = \mathbb{P} \left( | \langle L, \epsilon \rangle -  \langle L, \mathbb{E} \epsilon \rangle | \geq t \right) 
%  \leq 2 \exp \left(  - \frac{ct^2}{\|L\|_2^2 \|\epsilon\|_\infty^2}\right)   = 2\exp \left(-\frac{c'dt^2}{\log^2(d)} \right)
% $$
% by Hoeffding's Inequality. The random variable $1/\|x^{(i)}\|^2_2$ is inverse-chi-squared distributed with mean $d/(d-2)$. $\mathbb{E} \epsilon_i = \mathbb{E} [ 1/\|x^{(i)}\|^2_2 - 1 ]= 1/(d-2)$. Furthermore,
% $$
% \mathbb{E} \delta_i = \langle L, \mathbb{E}\epsilon_i \rangle = \mathcal{O}(1/d)\|M^{-1}\mathbf{1}\|_{\infty} \leq \mathcal{O}(1/d)\|M^{-1}\mathbf{1}\|_{2} = \mathcal{O}(1/d)\|x\|_{2}
% $$
% for some $x$ such that $Mx = \mathbf{1}$. Conditioned on $E_1$, $M (d/m)\mathbf{1} = \mathbf{1} + s$ for some $\|s\|_2 \leq Cd/\sqrt{m}$. So $x = (d/m) \mathbf{1} - M^{-1}s$ and $\mathcal{O}(1/d)\|x\|_2 = \mathcal{O}(1/\sqrt{m})$. Take $t = \log^2(d) / \sqrt{d}$. The claim follows.
\end{proof}

%%%%%%%%%%%%%%%%%%%%%%%%%%%%%%%%%%%%%%%%%%%%%%%%%%%%%%%%%%%%%%%%%%%%%%%%%%%%%%%%%%%%
\subsection{Spectral bounds on the perturbation}
To prove \cref{thm:main}, we need to show that $Q = I_d + \sum_{i=1}^n \delta_i \cdot X_i \trans{X_i}$ is positive semidefinite.
It suffices to show that the perturbation matrix $P = \sum_{i=1}^n \delta_i \cdot X_i \trans{X_i}$ satisfies $\norm{P} \leq 1$.
To obtain a bound on the spectral norm of the above matrix $P$, we will use the standard argument that it suffices to bound $\norm{Pu}$ for all $u$ in a $\kappa$-net of the sphere $S^{d-1}$ \ie a set of points $N$ such that for all $x \in S^{d-1}$, there exists $y \in N$ with $\norm{x-y}_2 \leq \kappa$. In particular, we will use the following result.
\begin{lemma}[\cite{Ver18}]
\label{net}
Let $A \in \R^{d \times d}$ be a symmetric matrix and let $N$ be a $\kappa$-net of $S^{d-1}$. Then,
\[
\norm{A} ~=~ \sup_{x \in S^{d-1}} \abs{\ip{x}{Ax}} ~\leq~ \frac{1}{1-2\kappa} \cdot \sup_{u \in N}\abs{\ip{u}{Au}}
\]
\end{lemma}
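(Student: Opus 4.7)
The plan is to prove this by a standard $\epsilon$-net argument, since the statement is a classical fact about approximating the quadratic form of a symmetric matrix on the sphere by its values on a net.

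First I would establish the identity $\norm{A} = \sup_{x \in S^{d-1}} |\ip{x}{Ax}|$. For a symmetric matrix, the spectral norm equals the largest absolute value of an eigenvalue, which by the variational characterization equals the supremum of $|\ip{x}{Ax}|$ over unit vectors $x$. By compactness of $S^{d-1}$, this supremum is attained at some $x^\ast \in S^{d-1}$, so $\norm{A} = |\ip{x^\ast}{Ax^\ast}|$.

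Next, I would use the net property to pick $u \in N$ with $\norm{x^\ast - u}_2 \leq \kappa$ and bound the difference $\ip{x^\ast}{Ax^\ast} - \ip{u}{Au}$. Using bilinearity of the inner product together with symmetry of $A$, we have
\[
\ip{x^\ast}{Ax^\ast} - \ip{u}{Au} ~=~ \ip{x^\ast - u}{Ax^\ast} + \ip{u}{A(x^\ast - u)}.
\]
By Cauchy-Schwarz together with the operator norm bound $\norm{Ay}_2 \leq \norm{A} \cdot \norm{y}_2$, each of the two terms on the right is bounded in absolute value by $\norm{A} \cdot \norm{x^\ast - u}_2 \leq \kappa \cdot \norm{A}$ (using $\norm{x^\ast}_2 = 1$ and $\norm{u}_2 = 1$). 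Thus
\[
\big| \ip{x^\ast}{Ax^\ast} - \ip{u}{Au} \big| ~\leq~ 2\kappa \cdot \norm{A}.
\]

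Finally, the triangle inequality gives $\norm{A} = |\ip{x^\ast}{Ax^\ast}| \leq |\ip{u}{Au}| + 2\kappa \cdot \norm{A} \leq \sup_{v \in N} |\ip{v}{Av}| + 2\kappa \cdot \norm{A}$, and rearranging yields $(1-2\kappa) \cdot \norm{A} \leq \sup_{v \in N} |\ip{v}{Av}|$, which is the desired inequality. There is no real obstacle here since this is a textbook argument; the only minor subtlety is keeping track that both cross terms contribute $\kappa \cdot \norm{A}$ (not $\kappa^2 \cdot \norm{A}$), which is handled cleanly by the splitting above rather than by expanding $\ip{x^\ast}{Ax^\ast}$ as $\ip{u + (x^\ast - u)}{A(u + (x^\ast - u))}$ with its three terms.
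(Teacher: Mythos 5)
Your proof is correct and follows the same $\kappa$-net argument as the paper: take a maximizer $x^\ast$ of the quadratic form, pick the nearest net point $u$, decompose the difference $\ip{x^\ast}{Ax^\ast} - \ip{u}{Au}$ into two cross terms, and bound each by $\kappa \cdot \norm{A}$ via Cauchy--Schwarz and the operator-norm bound. In fact your decomposition $\ip{x^\ast - u}{Ax^\ast} + \ip{u}{A(x^\ast - u)}$ is the algebraically exact one; the paper's second term is written as $\ip{x}{A(x-u)}$, which is a harmless slip (it should be $\ip{u}{A(x-u)}$) that does not affect the final $2\kappa\norm{A}$ bound.
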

\begin{proof}
Let $x \in S^{d-1}$ be the maximizer of the first quadratic form with $\abs{\ip{x}{Ax}} = \norm{A}$, 
and let $u \in N$ be the closest point. Then
\[ 
\norm{A} - \abs{\ip{u}{Au}} ~\leq~ \abs{\ip{(x-u)}{Ax}} + \abs{\ip{x}{A(x-u)}} ~\leq~ 2 \cdot \norm{Ax}\cdot \norm{x-u} ~\leq~ 2\kappa \cdot \norm{A} \mper \qedhere
\]
\end{proof}
Combined with known estimates on sizes of $\kappa$-nets, stating for example that there exists a $1/4$-net size at most $9^d$ (see \eg ~\cite{Ver18}, Corollary 4.2.13), it suffices to prove that for a \emph{fixed} $u$ in a $1/4$-net $N$, we have $\abs{\ip{u}{Pu}} \leq 1/2$ with probability at least (say) $1 - \exp(-4 d)$ (to allow for a union bound).

We start by considering $\ip{u}{Pu} = \sum_{i=1}^n \delta_i \cdot \ip{u}{X_i}^2$. We will consider the vector $\beta$ with $\beta_i = \ip{u}{X_i}^2$, and split it in two parts according to a threshold $t_0$ to be chosen later.
\[
\beta_{heavy} = \sum_{i \in [n]} \beta_i \cdot e_i \cdot \indicator{\beta_i > t_0} 
\qquad \text{and} \qquad
\beta_{light} = \beta - \beta_{heavy} \mper
\]
We first prove that $\supp(\beta_{heavy}) = \inbraces{i | \beta_i > t_0}$ is bounded in size.
\begin{claim}
\label{support}
There exist $C_1, C_2 > 0$ such that over the choice of $X_1, \ldots, X_n$, we have for $t_0 \geq C_1/d$,
\[
\Pr{X_1, \ldots, X_n}{\abs{\supp(\beta_{heavy})} \geq C_2/t_0} ~\leq~ \exp(-100 \cdot d) \mper
\]
\end{claim}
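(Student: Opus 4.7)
The plan is to realize $\abs{\supp(\beta_{heavy})}$ as a sum of i.i.d.\ Bernoulli indicators and then apply a Chernoff-type tail bound. Since $u$ is a fixed unit vector and $X_1,\ldots,X_n$ are i.i.d.\ uniform on $S^{d-1}$, the indicators $\mathbf{1}[\beta_i > t_0]$ are i.i.d.\ with common success probability $p \defeq \Pr{\ip{u}{X_1}^2 > t_0}$, so $\abs{\supp(\beta_{heavy})} \sim \mathrm{Bin}(n, p)$.

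The first step is a quantitative bound on $p$ coming from the sub-gaussian tail of linear forms on the sphere. Writing $X_1 = G/\norm{G}_2$ with $G \sim \gaussian{0}{I_d}$, one has $\ip{u}{G} \sim \gaussian{0}{1}$, and $\norm{G}_2 \geq \sqrt{d}/2$ on an event of probability at least $1 - \exp(-cd)$ (exactly as in the proof of \cref{subexponential}). Conditioning on that event and invoking the standard Gaussian tail gives
\[
p ~\leq~ 2\exp\inparen{-d t_0 / 8} + \exp(-cd) ~\leq~ C' \exp\inparen{-c' d t_0} \mcom
\]
and for $t_0 \geq C_1/d$ this simplifies to $p \leq C'\exp(-c' C_1)$, which can be made as small as we like by choosing $C_1$ large.

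Next I would plug this into the Chernoff inequality $\Pr{\mathrm{Bin}(n,p) \geq k} \leq (enp/k)^k$ with $k = C_2/t_0$. Using $n \leq d^2/C$ and the substitution $\tau \defeq d t_0 \geq C_1$, the base rewrites as
\[
\frac{enp}{k} ~\leq~ \frac{eC'}{CC_2} \cdot d\tau \cdot \exp(-c'\tau)\mper
\]
For $C_1$ chosen large enough that the exponential $\exp(-c'\tau)$ dominates the polynomial prefactor and drives this base below $1/e^2$, raising to the power $k = C_2/t_0 \geq C_2 d / \tau$ gives a bound of the form $\exp(-c'' C_2 d / \tau \cdot \tau) = \exp(-c'' C_2 d)$ in the relevant regime, and then picking $C_2$ appropriately large (e.g.\ $C_2 \geq 200/c''$) yields the claimed $\exp(-100 d)$.

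The main obstacle is the joint calibration of the two universal constants. Concretely, $C_1$ must be large enough for $\exp(-c'dt_0)$ to absorb the polynomial-in-$d$ factors arising from $n$ and from $k = C_2/t_0$, and simultaneously $C_2$ must be taken large enough relative to $C_1$ so that the resulting Chernoff exponent actually exceeds $100d$ uniformly over $t_0 \geq C_1/d$, with the worst case being the extreme $t_0 = C_1/d$ (equivalently $\tau = C_1$). Once these constants are tuned, everything else is a routine Chernoff computation.
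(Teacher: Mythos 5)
Your strategy is the same as the paper's: a sub-gaussian tail bound on $p \defeq \Pr{\ip{u}{X_1}^2 > t_0}$ followed by a binomial/union tail bound (your $(enp/k)^k$ is the paper's $\binom{n}{k}p^k \leq n^k e^{-c t_0 d k}$). The derivation of $p \leq C'\exp(-c' d t_0)$ via $X_1 = G/\norm{G}_2$ and the Gaussian tail is fine. The problem is the calibration step you yourself single out as the main obstacle: it genuinely fails at the endpoint $t_0 = C_1/d$, and no choice of the constants repairs it. Your base is $enp/k \leq \frac{eC'}{CC_2}\, d\,\tau\, e^{-c'\tau}$ with $\tau = dt_0 = C_1$ a \emph{constant}; it contains an uncancelled factor of $d$, so it tends to infinity with $d$ no matter how large $C_1$ is — enlarging $C_1$ shrinks $e^{-c'C_1}$ but cannot beat $d$. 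The failure is not an artifact of a lossy bound: at $t_0 = C_1/d$ the success probability $p = \Pr{\abs{\ip{u}{X_1}} > \sqrt{C_1/d}}$ is a positive constant (the linear form $\ip{u}{X_1}$ is essentially $\gaussian{0}{1/d}$, and the sub-gaussian upper bound on $p$ is tight up to constants here), so $\abs{\supp(\beta_{heavy})} \sim \mathrm{Bin}(n,p)$ has mean $\Theta(d^2)$, far above the threshold $C_2/t_0 = \Theta(d)$. The event whose probability is claimed to be $e^{-100d}$ in fact occurs with probability $1 - o(1)$.

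So the statement is false as written for $t_0 = \Theta(1/d)$, and your proof cannot be completed in that regime. What your computation does establish is the claim under the stronger hypothesis $c' t_0 d \geq 2\ln n + O(1)$, i.e.\ $t_0 \geq C_1 (\log d)/d$: then the base $enp/k$ is at most $e^{-2}$, and raising to the power $k = C_2/t_0 \geq C_2 d/\tau$ gives... careful, you want the exponent $k \cdot \ln(k/(enp))$ to be at least $100d$, which holds once the base is small enough and $C_2$ is large, exactly as you outline. This is harmless for the application, where $t_0 = \Theta(d^{-1/4}) \gg (\log d)/d$. You should be aware that the paper's own one-line proof ("$n^k e^{-ct d k}$ can be made smaller than $e^{-100d}$ for $k = O(1/t_0)$") elides precisely the same point: it implicitly needs $c t_0 d > \ln n \approx 2\ln d$. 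Your write-up inherits the bug rather than introducing it, but the assertion that tuning $C_1$ drives the base below $1/e^2$ uniformly over $t_0 \geq C_1/d$ is incorrect and should be replaced by the restriction $t_0 \gtrsim (\log d)/d$.
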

\begin{proof}
Since $X_i \sim Unif(S^{d-1})$, $\norm{X_i}_{\psi_2} \leq C/\sqrt{d}$, which implies $\Pr{\ip{u}{X_i}^2 \geq t} \leq \exp(-c \cdot t \cdot d)$. By independence, the probability of having $k$ heavy coordinates is at most $n^k \cdot \exp(-c \cdot t \cdot d \cdot k)$, which can be made smaller than $\exp(-100 \cdot d)$ for $k = O(1/t_0)$.
\end{proof}
%
% Since $v_i \sim Unif(S^{d-1})$, $\|v_i\|_{\psi_2} \leq C/\sqrt{d}$ for all $i$. By the definition of $\psi_2$ norm of a random vector, we have $\mathbb{P}(|\langle v_i, u\rangle| \geq t) \leq 2\exp(-ct^2d), \forall u \in S^{d-1}$ and $t \geq 0.$ Let 
% $\beta_i = \langle v_i, u\rangle^2$. We have the tail bound for the value of each $\beta_i$,
% $$\mathbb{P}(\beta_i \geq \theta) \leq 2\exp(-c\theta d).$$
% Let $t_0$ denote the threshold for the values of $\beta_i$. We split $\beta$ into two parts $\beta_{light}$ and $\beta_{heavy}$ according to $t_0$. $\beta_{light}$ contains coordinates whose value are less than $t_0$ and $\beta_{heavy}$ contains coordinates whose value are greater than $t_0$. $$\beta = \beta_{light} + \beta_{heavy}.$$ The size of the support of $\beta_{heavy}$ is at most $\mathcal{O}(1/t_0)$ with probability $1-\exp(-cd)$.
%
We next prove that $\|\beta_{light}\|_2$ is bounded, which allows for concentration bounds on $\ip{\delta}{\beta_{light}}$.
\begin{lemma}[$\ell_2$ bound on $\beta_{light}$]
\label{light}
There exist constants $c_0, C_0 > 0$ such that for $t_0 < c_0$, we have
\[
\Pr{X_1,\ldots,X_n}{\norm{\beta_{light}}^2_2 \geq C_0 \cdot \frac{n}{d^2}} ~\leq~ \exp(-100 \cdot d) \mper
\]
% $\|\beta_{light}\|^2_2 = \mathcal{O}(m/d^2)$ with probability $1- \exp(-cd)$. 
\end{lemma}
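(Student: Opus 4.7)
The plan is to reduce the $\ell_2^2$ concentration question to a scalar $\ell_1$ concentration question using the truncation, and then apply Bernstein's inequality. On the support of $\beta_{light}$ each coordinate satisfies $\beta_i \le t_0$, giving the pointwise inequality $\beta_i^2 \le t_0 \cdot \beta_i$ and hence
\[
\norm{\beta_{light}}_2^2 ~=~ \sum_{i \,:\, \beta_i \le t_0} \beta_i^2 ~\le~ t_0 \sum_{i=1}^n \beta_i ~=~ t_0 \cdot S \mcom
\]
where $S := \sum_{i=1}^n \ip{u}{X_i}^2$. It therefore suffices to give a high-probability upper bound on the scalar sum $S$.

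For $X_i \sim \mathrm{Unif}(S^{d-1})$ the marginal $\ip{u}{X_i}$ is sub-Gaussian with $\psi_2$-norm $O(1/\sqrt d)$, as follows from the Gaussian representation $X_i = G_i / \norm{G_i}_2$ already used in the proof of \cref{subexponential}. Hence $\beta_i = \ip{u}{X_i}^2$ is a nonnegative, mean-$1/d$ sub-exponential random variable with $\psi_1$-norm $O(1/d)$, and Bernstein's inequality for independent sub-exponential summands gives, for every $s > 0$,
\[
\Pr{\abs{S - n/d} \ge s} ~\le~ 2 \exp\inparen{-c \cdot \min\inparen{\frac{s^2 d^2}{n},\, s \cdot d}} \mper
\]
Setting $s = n/d$ equalises the two arguments of the min at $c \cdot n = c \cdot d^2/C$, which is much larger than $100 d$. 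Thus $S \le 2n/d$ with probability at least $1 - \exp(-100 \cdot d)$, and combining with the previous display yields $\norm{\beta_{light}}_2^2 \le 2 t_0 \cdot n/d$ on the same event.

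The lemma then follows by choosing $c_0$ so that $2 t_0 n/d \le C_0 n/d^2$ for all $t_0 \le c_0$. The main subtlety I expect is calibrating $c_0$ and $C_0$ compatibly with the other constraints on $t_0$ elsewhere in the proof of \cref{thm:main}, in particular the lower bound $t_0 \ge C_1/d$ from \cref{support} and the final spectral-norm control of $P = \sum_i \delta_i X_i \trans{X_i}$; in the eventual application, $t_0$ will be chosen at a scale $o(1)$ (of order $1/\sqrt d$ up to logarithmic factors), comfortably respecting whatever bound on $c_0$ the argument extracts. An alternative route is to apply Bernstein directly to the truncated sum $\sum_i \beta_i^2 \indicator{\beta_i \le t_0}$, using the moment estimate $\mathbb{E}\beta_i^4 = O(1/d^4)$ coming from the $\mathrm{Beta}(1/2,(d-1)/2)$ law of $\beta_i$ together with the uniform bound $\beta_i^2 \indicator{\beta_i \le t_0} \le t_0^2$; this yields the same qualitative trade-off, so the real technical content is the constant bookkeeping rather than the concentration itself.
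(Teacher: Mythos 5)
There is a genuine gap, and it is in the very first step. The pointwise inequality $\beta_i^2 \le t_0 \beta_i$ on the light coordinates gives $\norm{\beta_{light}}_2^2 \le t_0 \cdot S$ with $S \approx n/d$, hence a bound of order $t_0 \cdot n/d$. For this to be $\le C_0 \cdot n/d^2$ you need $t_0 \le C_0/(2d)$, so the ``constant'' $c_0$ you extract at the end is forced to be of order $1/d$ --- contradicting the lemma statement, which asserts the bound for all $t_0$ below a fixed constant $c_0$, and contradicting the application, where $t_0 = \Theta(d^{-1/4})$ (not $d^{-1/2}$ as you wrote). At that scale your bound gives $\norm{\beta_{light}}_2^2 = O(n/d^{5/4})$, losing a factor $t_0 d = d^{3/4}$; plugged into the final Hoeffding step for $\ip{\eps}{M^{-1}\beta_{light}}$, the exponent degrades from $\Theta(d^3/n) = \Theta(d)$ to $\Theta(d^{1/4})$, which cannot survive the union bound over the $9^d$-point net. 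The reduction to the $\ell_1$ mass of $\beta$ is simply too lossy: the whole point of the lemma is that $\E\beta_i^2 = O(1/d^4) \cdot d^2$... more precisely that $\sum_i \E\beta_i^2 = O(n/d^2)$, a factor $t_0 d$ below $t_0\sum_i\E\beta_i$.

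Your alternative route is closer in spirit to the paper's proof but, as stated, also falls short: Bernstein with the uniform bound $\beta_i^2\indicator{\beta_i\le t_0} \le t_0^2$ restricts the admissible MGF parameter to $\lambda \lesssim t_0^{-2} = \sqrt{d}$, whereas reaching $\exp(-100d)$ at deviation level $n/d^2$ requires $\lambda \approx d$; with $t_0 = d^{-1/4}$ you only get a tail of order $\exp(-c\,n/(d^2t_0^2)) = \exp(-c\sqrt{d})$. The missing idea is to exploit the actual tail $\Pr{\beta_i \ge t} \le \exp(-c t d)$ \emph{inside} the MGF computation: the paper writes $\E\exp(\lambda\min\{\beta_i^2,t_0^2\}) \le 1 + \int_0^{t_0} e^{\lambda t^2 - cdt}\,\lambda t\,\diff t$, observes that for $\lambda \le cd/(2t_0)$ the integrand is dominated by $e^{-cdt/2}\lambda t$, and concludes $\E\exp(\lambda\min\{\beta_i^2,t_0^2\}) \le 1 + O(\lambda/d^2)$. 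The constraint $\lambda = d \le cd/(2t_0)$ is exactly what forces $t_0 \le c_0$ for a \emph{constant} $c_0$, and Markov at $\lambda = d$ then yields the tail $\exp(-c\,n/d) = \exp(-\Omega(d))$ at level $C_0 n/d^2$. In short, the truncation's role is not to make each summand small in absolute terms but to keep the exponential moment controlled up to $\lambda = d$; neither of your two bounds captures this.
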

\begin{proof}
We will use again that sub-gaussian behavior of $X_i$ implies $\Pr{\beta_i \geq t} \leq \exp(- c \cdot td)$. For $\lambda \in (0,cd/(2t_0))$, we have
\begin{align*}
\Ex{\exp\inparen{\lambda \cdot \min\{\beta_i^2, t_0^2\}}}
&~=~\int_{0}^{exp(\lambda t_0^2)} \mathbb{P}\left(\exp(\lambda \beta_i^2) \geq s \right) \diff s \\
&~\leq~ 1 + \int_{1}^{exp(\lambda t_0^2)} \mathbb{P}\left(\exp(\lambda \beta_i^2) \geq s \right) \diff s \\ 
&~=~ 1 + \int_{0}^{t_0} \mathbb{P}\left(\beta_i^2 \geq t \right) 2\lambda t \exp(\lambda t^2)  \diff t  \\  
&~\leq~ 1 + \int_{0}^{t_0} \exp(\lambda t^2 -cdt)  \lambda t \diff t \\
&~\leq~ 1 + \int_{0}^{t_0} \exp(-cdt/2)  \lambda t \diff t 
~=~ 1 + \frac{8\lambda}{c^2d^2}
\end{align*}
where the last inequality used $\lambda < cd/(2t_0)$. We thus have
\[
\mathbb{E} [\exp(\lambda \cdot \|\beta_{light}\|^2_2)]  ~\leq~ \prod_{i = 1}^n \Ex{\exp\inparen{\lambda \cdot \min\{\beta_i^2, t_0^2\}}}  ~\leq~ (1 + 8\lambda / c^2d^2)^n ~\leq~ \exp(8\lambda n/c^2 d^2) \mper
\]
Taking $c_0 < c/2$ and choosing $\lambda = d$ gives by Markov's inequality,
\[
\Pr{\norm{\beta_{light}}^2_2 \geq C_0 \cdot \frac{n}{d^2}} 
~\leq~ \exp\inparen{\frac{8d \cdot n}{c^2d^2} - \frac{d \cdot C_0 \cdot n}{d^2}} 
~\leq~ \exp(-100 \cdot d) \mcom
\]
since the probability can be made small by an appropriate choice of $C_0$.
\end{proof}
We now have all the ingredients to prove \cref{thm:main}.
%
% \begin{lemma}[1/4-net argument]
% $\left \|\sum_{i=1}^m \delta_i v^{(i)} (v^{(i)})^T \right \| \leq 1$ with probability $1 - o_d(1)$.
% \end{lemma}
%
\begin{proofof}{\cref{thm:main}}
Let $E = E_1 \wedge E_2 \wedge E_3$ and note that $\mathbb{P}(E) = 1 - o_d(1)$. Conditioned on $E$, we will show that 
for any $u \in N$, $|\ip{u}{Pu}| = |\ip{\delta}{\beta}| \leq 1/2$ with exponentially high probability, and then take a union bound over N. 
From \cref{bound2} we know that $\norm{\delta}_{\infty} = O(\log (d)/\sqrt{d})$ and \cref{support} implies that $\supp(\beta_{heavy})$ is bounded by $O(1/t_0)$ with probability $1 - \exp(-100 \cdot d)$. Thus,
\[
|\langle \delta, \beta_{heavy}\rangle| 
~\leq~ \| \delta \|_{\infty} \cdot \|\beta_{heavy}\|_1 
~\leq~ O\inparen{\log(d) / (\sqrt{d} \cdot t_0)} ~\leq~ \frac16 \quad \text{for}~ t_0 = O(d^{-1/4})
\]
Now we turn to the contribution from $\beta_{light}$, which can be written as
\[
|\langle \delta, \beta_{light}\rangle| 
~=~ |\langle M^{-1}\epsilon, \beta_{light}\rangle| 
~=~  |\langle M^{-1}\epsilon, \beta_{light}\rangle| 
~=~  |\langle \epsilon, M^{-1}\beta_{light}\rangle| 
\]
By \cref{operator} and \cref{light}, we have $\|M^{-1}\beta_{light}\|_2^2 \leq \|M^{-1}\|^2_2\|\|\beta_{light}\|_2^2 = O(n/d^2)$. 
Since $\epsilon$ and $M^{-1}\beta_{light}$ are independent, and $\epsilon$ conditioned on $E_2$ has sub-gaussian norm $O(1/\sqrt{d})$ by \cref{bound1}, Hoeffding's inequality implies that,
\[
\Pr{ \left|\langle \epsilon - \expop \epsilon, M^{-1}\beta_{light}\rangle \right| > \frac{1}{6} }
~\leq~ 2 \cdot \exp \left(-\frac{c/36}{\|M^{-1}\beta_{light}\|_2^2 \cdot \|\epsilon - \expop \epsilon\|^2_{\psi_2}}\right) 
~=~ 2 \cdot \exp (-c' \cdot d^3/n) \mcom
\]
which is at most $\exp(-100 \cdot d)$ when $n < d^2/C$ for an appropriate $C$.
Finally, note that using \cref{subgaussian} and \cref{light}, with probability $1 - \exp(-100 \cdot d)$,
\[
\abs{\ip{\expop \eps}{M^{-1} \beta_{light}}} ~=~ \frac{2}{d-2} \cdot \abs{\ip{\mathbf{1}}{M^{-1} \beta_{light}}} ~\leq~ \frac{2}{d-2} \cdot \sqrt{n} \cdot \norm{M^{-1} \beta_{light}} ~=~ O\inparen{\frac{1}{d} \cdot \sqrt{n} \cdot \frac{\sqrt{n}}{d}} \mper
\]
%
%
% Note that $\left| \langle \mathbb{E}\epsilon, M^{-1}\beta_{light}\rangle \right| \leq  \|\mathbb{E}\epsilon\|_2\| M^{-1}\beta_{light} \|_2 \leq \mathcal{O}(1/d) \sqrt{m} (\sqrt{m}/d) $. Choose $m = d^2 / C$ with sufficiently large $C$ such that  $\left| \langle \mathbb{E}\epsilon, M^{-1}\beta_{light}\rangle \right| \leq 1/6$. We have 
% $$
% \mathbb{P} \left( \left| \langle \epsilon, M^{-1}\beta_{light}\rangle - \langle \mathbb{E}\epsilon, M^{-1}\beta_{light}\rangle \right| > \frac{1}{6}\right) \leq 2 \exp(-cd).
% $$
%
Thus, we have that conditioned on $E$, $\Pr{\abs{\ip{\delta}{\beta}} \geq  1/2} \leq \exp\inparen{-10 \cdot d}$ when $n < d^2/C$ for a large enough $C$. We can now take the union bound on the net to show
\begin{align*}
\mathbb{P}\left(\sup_{u\in N} \left |\sum_{i=1}^n \delta_i \left \langle u,X_i \right \rangle^2 \right | \geq \frac{1}{2} \right) 
&~\leq~ 
\mathbb{P}\left( \sup_{u\in N} \left |\sum_{i=1}^n \delta_i \left \langle u,X_i \right \rangle^2 \right | \geq \frac{1}{2} \Bigg | E\right)\cdot \mathbb{P}(E) + \mathbb{P}(E^c) \\ 
&~\leq~ 9^d \cdot \Bigg[\mathbb{P} \Big(|\langle \delta, \beta \rangle |\geq 1/2 \Big| E \Big) \cdot \mathbb{P}(E) \Bigg ]  + \mathbb{P} (E^c) \\
&~\leq~ 9^d \cdot \exp(- 10 \cdot d) + o_d(1) ~=~ o_d(1) \mper \qedhere
\end{align*}
Combined with \cref{net}, this implies $\norm{P} \leq 1$ and hence $Q \succeq 0$ with probability $1 - o_d(1)$.
\end{proofof}

\section*{Acknowledgements}
We are grateful to Aaron Potechin for introducing us to this problem, to Goutham Rajendran and Jeff Xu for enlightening conversations, and to Afonso Bandeira for encouraging us to publish this note.

\bibliographystyle{alphaurl}
\begingroup

\raggedright 
\bibliography{macros,madhur}

\endgroup
\
\end{document}